\newtheorem{theorem}{Theorem}[section]
\newtheorem{lemma}[theorem]{Lemma}
\newtheorem{proposition}[theorem]{Proposition}
\newtheorem{corollary}[theorem]{Corollary}
\theoremstyle{definition}
\newtheorem{definition}[theorem]{Definition}
\newtheorem{example}[theorem]{Example}
\theoremstyle{remark}
\newtheorem{remark}[theorem]{Remark}
\numberwithin{equation}{section}
\newcommand{\temp}{\mathrm{temp}}
\newcommand{\Obs}{\mathrm{Obs}}
\newcommand{\CRP}{\mathrm{CRP}}
\newcommand{\GalR}{\mathrm{GalR}}
\newcommand{\GalP}{\mathrm{GalP}}
\newcommand{\MglB}{\mathrm{MglB}}
\newcommand{\cAMP}{\mathrm{cAMP}}
\newcommand{\Dgal}{\mathrm{D-gal}}
\begin{document}

\begin{abstract}                          
Motivated by safety--critical applications in cyber--physical systems, in this paper we study the notion of critical observability and design of observers for networks of Finite State Machines (FSMs). Critical observability is a property of FSMs that corresponds to the possibility of detecting if the current state of an FSM belongs to a set of critical states modeling operations that may be unsafe or, in general, operations of specific interest in a particular application. A critical observer is an observer that detects on--line the occurrence of critical states. When a large--scale network of FSMs is considered, the construction of such an observer is prohibitive because of the large computational effort needed. In this paper we propose a decentralized architecture for critical observers of networks of FSMs, where on--line detection of critical states is performed by local critical observers, each associated with an FSM of the network. For the efficient design of decentralized critical observers we first extend on--the--fly algorithms traditionally used in the community of formal methods for the formal verification and control design of FSMs. We then extend to networks of FSMs, bisimulation theory traditionally given in the community of formal methods for single FSMs. The proposed techniques provide a remarkable computational complexity reduction, as discussed throughout the paper and also demonstrated by means of illustrative examples. Finally, we propose an example in the context of biological networks, which illustrates the applicability of our results and also the interest arising from concrete application domains.
\end{abstract}

\title[Decentralized Critical Observers of Networks of Finite State Machines and Model Reduction]{Decentralized Critical Observers of Networks of Finite State Machines and Model Reduction}

\thanks{The research leading to these results has been partially supported by the Center of Excellence DEWS.}

\author[Giordano Pola, Davide Pezzuti, Elena De Santis and Maria D. Di Benedetto]{
Giordano Pola, Davide Pezzuti, Elena De Santis and Maria D. Di Benedetto
}
\address{$^1$Department of Information Engineering, Computer Science and Mathematics, Center of Excellence DEWS,
University of L{'}Aquila, 67100 L{'}Aquila, Italy}
\email{
\{giordano.pola,davide.pezzuti,elena.desantis,mariadomenica.dibenedetto\}@univaq.it}

\maketitle

\section{Introduction}
In recent years, Cyber--Physical Systems have been intensively
investigated by both academic and industrial communities because they offer
a solid paradigm for the modeling, analysis and control of many next
generation large--scale, complex, distributed and networked engineered
systems. Among them, safety--critical applications, as for example Air
Traffic Management (ATM) systems, play a prominent role, see e.g. \cite{MAREA,ThesisPezzuti}. Ensuring safety in
large--scale and networked safety--critical applications is a tough but
challenging problem. In particular, complexity is one of the most difficult
issues that must be overcome to make theoretical methodologies applicable to
real industrial applications. \newline
In this paper we address the analysis of critical observability and
design of observers for networks of Finite State Machines (FSMs). A network of
FSMs is a collection of FSMs whose interaction is captured by the notion of
parallel composition. Critical observability is a property that corresponds
to the possibility of detecting if the current state of an FSM belongs to 
a set of critical states, 
modeling operations that may be unsafe or, in general, operations of specific interest in a particular application. 
This notion has been introduced in \cite{DeSantis06} for linear switching
systems and is relevant in safety--critical applications where the timely recovery of human errors and device failures is of
primary importance in ensuring safety. Current approaches available in the literature to
check critical observability are based on regular language theory as in \cite{IJRNC08} or on the design of the so--called critical observers \cite{Cassandras,DeSantis06}. The computational complexity of the first
approach is polynomial in the number of states of the FSM, while the one of the
second is exponential. Although disadvantageous from the computational
complexity point of view, the construction of critical observers cannot be
avoided at the implementation layer since it is necessary for the automatic on--line
detection of critical situations. Motivated by this issue we elaborated on some results which can reduce, in
some cases drastically, the computational effort in designing critical
observers for large--scale networks of FSMs. 
We first propose a decentralized architecture for critical observers of the network, which is
composed of a collection of local critical observers, each associated
with an FSM of the network.
Efficient algorithms for the synthesis of critical observers are proposed, which extend on--the--fly techniques traditionally used by the community of formal methods for verification and control of FSMs (see e.g. \cite{onthefly3,onthefly2}). We then propose results on model reduction which extend to networks of FSMs, bisimulation theory \cite{Milner,Park} traditionally given by the community of formal methods for single FSMs.        
We define a bisimulation equivalence that takes into account criticalities. We then reduce the original
network of FSMs to a smaller one, obtained as the quotient of the original
network induced by the bisimulation equivalence. In the reduction process,
FSMs composing the network are never composed, a key factor in complexity reduction. We first show that
critical observability of the original network implies and is implied by the
critical observability of the quotient network. We then show that a decentralized critical
observer for the original network can be easily derived from the one
designed for the quotient network. Finally, we propose an example in the context of biological networks, which illustrates the applicability of our results and also the interest arising from concrete applications domain (see also \cite{ThesisPezzuti} for ATM applications). \\
While our approach that extends some techniques from formal methods (see e.g. \cite{ModelChecking}) 
is important \textit{per se} at a theoretical level, it also provides a set of new methodologies for 
computational complexity reduction in designing observers. Critical observability belongs to the set of observability\footnote{Here, the term "observability" is used in a broader sense.} concepts extensively studied in the literature of discrete event systems.
To the best of our knowledge, the formal methods techniques proposed in this paper have not yet been explored in the community of discrete event systems neither for the analysis of critical observability nor for the analysis of any other observability notion, with the only exception of \cite{Wonham:03}. We defer to the last section a discussion on connections with the existing literature.
\newline
This paper is organized as follows.
Section \ref{sec2} introduces notation, networks of FSMs and the
critical observability property. Section \ref{sec3bis} presents decentralized critical observers and Section \ref{sec4} model reduction via bisimulation equivalence. 
An example on the analysis of a biological network is reported in Section \ref{sec6}. In Section \ref{sec7} we provide a discussion on connections with existing literature and outlook.

\section{Networks of Finite State Machines and critical observability}

\label{sec2} 
In this section, we start by introducing our notation in
Subsection \ref{sec2.1}. We then recall the notions of networks of finite
states machines in Subsection \ref{sec2.2} and of critical observability in Subsection \ref{sec2.3}.

\subsection{Notation and preliminary definitions}

\label{sec2.1} The symbols $\wedge $ and $\vee $ denote the \emph{And} and 
\emph{Or} logical operators, respectively. The symbol $\mathbb{N}$ denotes
the set of nonnegative integers. Given $n,m\in \mathbb{N}$ with $n<m$ let be 
$[n;m]=[n,m]\cap \mathbb{N}$. The symbol $|X|$ denotes the cardinality of a
finite set $X$. The symbol $2^{X}$ denotes the power set of a set $X$. Given
a function $f:X\rightarrow Y$ we denote by $f(Z)$ the image of a set $
Z\subseteq X$ through $f$, i.e. $f(Z)=\{y\in Y|\exists z\in Z\text{ s.t. }
y=f(z)\}$; if $X^{\prime }\subset X$ and $Y^{\prime }\subset Y$ then $
f|_{X^{\prime }\rightarrow Y^{\prime }}$ is the restriction of $f$ to domain 
$X^{\prime }$ and co--domain $Y^{\prime }$, i.e. $f|_{X^{\prime }\rightarrow
Y^{\prime }}(x)=f(x)$ for any $x\in X^{\prime }$ with $f(x)\in Y^{\prime }$.
We now recall from \cite{Cassandras} some basic notions of language theory.
Given a set $\Sigma $, a finite sequence $w=\sigma _{1}\sigma _{2}\sigma _{3}...$
with symbols $\sigma _{i}\in \Sigma $ is called a word in $\Sigma $; the
empty word is denoted by $\varepsilon $. 
The Kleene closure $w^{\ast }$ of a word $w$ is the collection of
words $\varepsilon $, $w$, $ww$, $www$, \mbox{... .} The symbol $\Sigma
^{\ast }$ denotes the set of all words in $\Sigma $, including the
empty word $\varepsilon $. The concatenation of two words $u,v\in \Sigma
^{\ast }$ is denoted by $uv\in \Sigma ^{\ast }$. Any subset of $\Sigma
^{\ast }$ is called a language. The projection of a language $L\subseteq
\Sigma ^{\ast }$ onto a subset $\widehat{\Sigma }$ of $\Sigma $ is the
language $P_{\widehat{\Sigma }}(L)=\{t\in {\widehat{\Sigma }}^{\ast
}|\exists w\in L\text{ s.t. }P_{\widehat{\Sigma }}(w)=t\}$ where $P_{
\widehat{\Sigma }}(w)$ is inductively defined for any $w\in L$ and $\sigma
\in \Sigma $ by $P_{\widehat{\Sigma }}(\varepsilon )=\varepsilon $ and $P_{
\widehat{\Sigma }}(w\sigma )=P_{\widehat{\Sigma }}(w)\sigma $ if $\sigma \in 
\widehat{\Sigma }$ and $P_{\widehat{\Sigma }}(w\sigma )=P_{\widehat{\Sigma }
}(w)$, otherwise. 

\subsection{Networks of Finite State Machines}

\label{sec2.2} 
In this paper we consider the class of nondeterministic FSMs with observable labels:

\begin{definition}
\label{fsm} A Finite State Machine (FSM) $M$ is a tuple $(X,X^0,\Sigma,
\delta)$ where $X$ is the set of states, $X^0\subseteq X$ is the set of
initial states, $\Sigma$ is the set of input labels and $\delta:X \times
\Sigma \rightarrow 2^X$ is the transition map. 
\end{definition}

A state run $r$ of an FSM $M$ is a sequence $x^{0}\rTo^{\sigma ^{1}}x^{1}\rTo
^{\sigma ^{2}}$ $x^{2}\rTo^{\sigma ^{3}}x^{3}\,...$ such that $x^{0}\in X^{0}
$, $x^{i}\in X$, $\sigma ^{i}\in \Sigma $ and $x^{i+1}\in \delta
(x^{i},\sigma ^{i+1})$ for any $x^{i}$ and $\sigma ^{i}$ in the sequence;
the sequence $\sigma ^{1}\,\sigma ^{2}\,\sigma ^{3}\,...$ is called the
trace associated with $r$. For $X^{\prime }\subseteq X$ and $\sigma \in
\Sigma $, we abuse notation by writing $\delta (X^{\prime },\sigma )$
instead of $\bigcup_{x\in X^{\prime }}\delta (x,\sigma )$. The extended
transition map $\hat{\delta}$ associated with $\delta $ is inductively
defined for any $w\in \Sigma ^{\ast }$, $\sigma \in \Sigma $ and $x\in X$ by 
$\hat{\delta}(x,\varepsilon )=\{x\}$ and $\hat{\delta}(x,w\sigma
)=\bigcup_{y\in \hat{\delta}(x,w)}\delta (y,\sigma )$. 
The language generated by $M$, denoted $L(M)$, is composed by all traces
generated by $M$, or equivalently, $L(M)=\{w\in \Sigma ^{\ast }|\exists
x^{0}\in X^{0}\text{ s.t. }\hat{\delta}(x^{0},w)\neq \varnothing \}$. 
An FSM $M$ is deterministic if $|X^{0}|=1$ and $|\delta (x,\sigma )|\leq 1$, for any $x\in X$ and $\sigma
\in \Sigma $. 
In this paper we are interested in studying whether it is possible to detect
if the current state of an FSM $M$ is or is not in a set of critical states $C\subset X$ modeling operations that may be unsafe or, in general, operations of specific interest in a particular application. We refer to an FSM $
(X,X^{0},\Sigma ,\delta )$ equipped with a set of critical states $C$ by the
tuple $(X,X^{0},\Sigma ,\delta ,C)$. We also refer to an FSM with outputs by
a tuple $(X,X^{0},\Sigma ,\delta ,Y,H)$, where $Y$ is the set of output
labels and $H:X\rightarrow Y$ is the output function. 
For simplicity we call an FSM equipped with critical states or with outputs
as an FSM. 
The operator $\mathrm{Ac}(\cdot )$ extracts the accessible part from an FSM $M=(X,X^{0},\Sigma ,\delta ,C)$ (resp. $M=(X,X^{0},\Sigma ,\delta ,Y,H)$),
i.e. $\mathrm{Ac}(M)=(X',X^0,\Sigma,\delta^{\prime},C^{\prime})$
(resp. $\mathrm{Ac}(M)=(X',X^0,\Sigma,\delta^{\prime},Y,H^{\prime})
$) where $X'=\{x\in X | \exists x^0 \in X^{0}\wedge w\in \Sigma ^{\ast }\text{ s.t. }x\in 
\hat{\delta}(x^{0},w)\}$, $\delta ^{\prime }=\delta |_{X^{\prime }\times
\Sigma \rightarrow X^{\prime }}$, $C^{\prime }=C\cap X^{\prime }$ and $
H^{\prime }=H|_{X^{\prime }\rightarrow Y}$. 
Interaction among FSMs is captured by the following notion of composition.

\begin{definition}
\label{composition} The parallel composition $M_1||M_2=
\left(X_{1,2},X_{1,2}^{0},\Sigma_{1,2},\delta_{1,2},C_{1,2}\right) $ between
two FSMs $M_1=(X_1,X_{1}^{0},\Sigma_1,\delta_1,C_1)$ and $
M_2=(X_2,X_{2}^{0},\Sigma_2,\delta_2,C_2)$ is the FSM $\mathrm{Ac}
(X^{\prime }_{1,2},X_{1,2}^{\prime,0},\Sigma^{\prime }_{1,2},\delta^{\prime
}_{1,2},C^{\prime }_{1,2}) $ where $X^{\prime }_{1,2}=X_1 \times X_2$, $
X_{1,2}^{\prime,0}=X_{1}^{0} \times X_{2}^{0}$, $\Sigma^{\prime
}_{1,2}=\Sigma_1 \cup \Sigma_2$, $C^{\prime }_{1,2}=\left(C_1 \times
X_2\right) \cup \left(X_1 \times C_2\right)$ and $\delta^{\prime
}_{1,2}:X^{\prime }_{1,2} \times \Sigma^{\prime }_{1,2} \rightarrow
2^{X^{\prime }_{1,2}}$ is defined for any $x_1 \in X^{\prime }_1$, $x_2 \in
X^{\prime }_2$ and $\sigma \in \Sigma^{\prime }_{1,2}$ by \newline
\begin{equation*}
\begin{cases}
\delta_1(x_1,\sigma) \times \delta_2(x_2,\sigma), \text{if }
\delta_1(x_1,\sigma) \neq \varnothing \wedge \,\,\delta_2(x_2,\sigma) \neq
\varnothing \\ 
\quad \quad \quad \quad \quad \quad \quad \quad \quad \wedge \,\, \sigma \in
\Sigma_1 \cap \Sigma_2 , \\ 
\delta_1(x_1,\sigma) \times \{x_2\}, \text{if } \delta_1(x_1,\sigma) \neq
\varnothing \wedge \,\,\sigma \in \Sigma_1 \backslash \Sigma_2 , \\ 
\{x_1\} \times \delta_2(x_2,\sigma), \text{if } \delta_2(x_2,\sigma) \neq
\varnothing \wedge \,\,\sigma \in \Sigma_2 \backslash \Sigma_1, \\ 
\varnothing, \text{otherwise.}
\end{cases}
\end{equation*}
\end{definition}

By definition, a state $(x_{1},x_{2})\in C_{1,2}$, i.e.  $(x_{1},x_{2})$ is
considered as critical for $M_{1}||M_{2}$, if and only if $x_{1}\in C_{1}$ or $
x_{2}\in C_{2}$. Vice versa, $(x_{1},x_{2})\notin C_{1,2}$ if and only if $
x_{1}\notin C_{1}$ and $x_{2}\notin C_{2}$. 
It is well known that

\begin{proposition}
\label{Com+Ass-prop} \cite{Cassandras} The parallel composition operation is
commutative up to isomorphisms and associative.
\end{proposition}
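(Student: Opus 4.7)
The plan is to exhibit explicit bijective state maps witnessing the two isomorphisms and to verify, by case analysis on the transition cases of Definition \ref{composition}, that they preserve initial states, transitions, input alphabet and critical states. Once a state-level isomorphism that preserves transitions is established on the pre-$\mathrm{Ac}$ machines, it restricts automatically to the accessible parts, so the outer $\mathrm{Ac}(\cdot)$ in Definition \ref{composition} never causes a problem: a state is accessible in the source FSM iff its image is accessible in the target.

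For commutativity, I would consider the swap $\phi:X_1\times X_2\to X_2\times X_1$ sending $(x_1,x_2)$ to $(x_2,x_1)$. Bijectivity is clear, $\phi(X_1^0\times X_2^0)=X_2^0\times X_1^0$, and $\Sigma_1\cup\Sigma_2=\Sigma_2\cup\Sigma_1$. For the transition map, the first case of Definition \ref{composition} is symmetric in the two machines, while the second and third cases are interchanged by $\phi$, matching the corresponding cases for $M_2\|M_1$ because $\Sigma_1\setminus\Sigma_2$ and $\Sigma_2\setminus\Sigma_1$ swap roles. Finally, $\phi\big((C_1\times X_2)\cup(X_1\times C_2)\big)=(C_2\times X_1)\cup(X_2\times C_1)$, so critical states are preserved.

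For associativity, I would introduce the reassociation $\psi:(X_1\times X_2)\times X_3\to X_1\times(X_2\times X_3)$ sending $((x_1,x_2),x_3)$ to $(x_1,(x_2,x_3))$. Bijectivity, preservation of initial states and equality of alphabets $\Sigma_1\cup\Sigma_2\cup\Sigma_3$ are immediate, and the critical-state sets on both sides unfold to $(C_1\times X_2\times X_3)\cup(X_1\times C_2\times X_3)\cup(X_1\times X_2\times C_3)$ after applying $\psi$. The substantive step is the transition check. Here I would split on membership of the event $\sigma$ in $\Sigma_1,\Sigma_2,\Sigma_3$ (seven non-trivial cases, one for each non-empty subset of $\{1,2,3\}$). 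In each case I would unfold the inner composition first, then the outer one, on both sides of $\psi$, and verify that the resulting product of component $\delta_i$'s (with identity factors $\{x_i\}$ on indices outside the subset) is the same up to the reassociation.

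The main obstacle is precisely this case analysis for associativity: the definition of $\|$ branches on whether an event is shared or private, so when three machines are involved one has to check seven subcases and confirm that, on each, both $(M_1\|M_2)\|M_3$ and $M_1\|(M_2\|M_3)$ reduce to the same ``synchronous product on the machines whose alphabet contains $\sigma$, identity on the others''. The check is entirely mechanical but bookkeeping-heavy; I would condense it by proving once and for all the auxiliary lemma that, on a symbol $\sigma$ lying in a nonempty subset $S\subseteq\{1,2,3\}$ of the alphabets, the transition under any parenthesization of $M_1\|M_2\|M_3$ equals $\prod_{i\in S}\delta_i(x_i,\sigma)\times\prod_{i\notin S}\{x_i\}$ (suitably reassociated), which immediately yields both halves of the statement.
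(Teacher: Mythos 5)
Your proposal is correct, but it does strictly more work than the paper, and it organizes that work differently. The paper's proof treats the critical-state-free reducts $M_i'=(X_i,X_i^0,\Sigma_i,\delta_i)$ and simply cites \cite{Cassandras} for the facts that $M_1'\|M_2'=_{\mathrm{iso}}M_2'\|M_1'$ (via the same swap $\phi(x_1,x_2)=(x_2,x_1)$ you propose) and that $(M_1'\|M_2')\|M_3'=M_1'\|(M_2'\|M_3')$; the only thing it verifies by hand is the genuinely new ingredient of this paper, namely that the critical-state sets are compatible, i.e. $C_{1,2}=\phi(C_{2,1})$ and $C_{(1,2),3}=C_1\times X_2\times X_3\,\cup\,X_1\times C_2\times X_3\,\cup\,X_1\times X_2\times C_3=C_{1,(2,3)}$ --- exactly the computation you sketch at the end of your associativity paragraph. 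You instead supply the transition-level verification from scratch, and your way of condensing the seven-subcase analysis into a single auxiliary lemma (on a symbol $\sigma$ whose alphabet-membership set is $S$, any parenthesization of the composition yields $\prod_{i\in S}\delta_i(x_i,\sigma)\times\prod_{i\notin S}\{x_i\}$ up to reassociation, with the empty-transition clause absorbed automatically since a product with an empty factor is empty) is cleaner than a raw case split and extends immediately to $N$ machines. Your remark that a transition-preserving bijection on the full product restricts to the accessible parts is the right way to dispose of the $\mathrm{Ac}(\cdot)$ operator, a point the paper leaves implicit. In short: same underlying mathematics, but you reprove the cited classical part with a reusable lemma, while the paper confines itself to checking the critical-state bookkeeping.
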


By the above property of parallel composition, we may write in the sequel $
M_{1}||M_{2}||M_{3}$, $X_{1,2,3}$ and $C_{1,2,3}$ instead of $
M_{1}||(M_{2}||M_{3})$, $X_{1,(2,3)}$ and $C_{1,(2,3)}$ or, instead of $
(M_{1}||M_{2})||M_{3}$, $X_{(1,2),3}$ and $C_{(1,2),3}$. \\
In this paper we consider a network 
\begin{equation*}
\mathcal{N}=\{M_{1},M_{2},...,M_{N}\}
\end{equation*}
of $N$ FSMs $M_{i}$ whose interaction is captured by the notion of parallel
composition; the corresponding FSM is given by $\mathbf{M}(\mathcal{N})=M_{1}||M_{2}||...||M_{N}$. 
The FSM $\mathbf{M}(\mathcal{N})$ is well
defined because the composition operator $||$ is associative. The definition
of parallel composition among an arbitrary number of FSMs is reported in
e.g. \cite{Cassandras} and coincides with the recursive application of the
binary operator $||$, as in Definition \ref{composition}. 
For the computational complexity analysis, we will use in the sequel the
number $n_{\max }=\max_{i\in \lbrack 1;N]}|X_{i}|$ as indicator of the sizes
of the FSMs composing the network $\mathcal{N}$. An upper bound to space and
time computational complexity in constructing $\mathbf{M}(\mathcal{N})$ is $
O(2^{N\log (n_{\max })})$.

\begin{figure}[tbp]
\begin{center}
\includegraphics[scale=0.24]{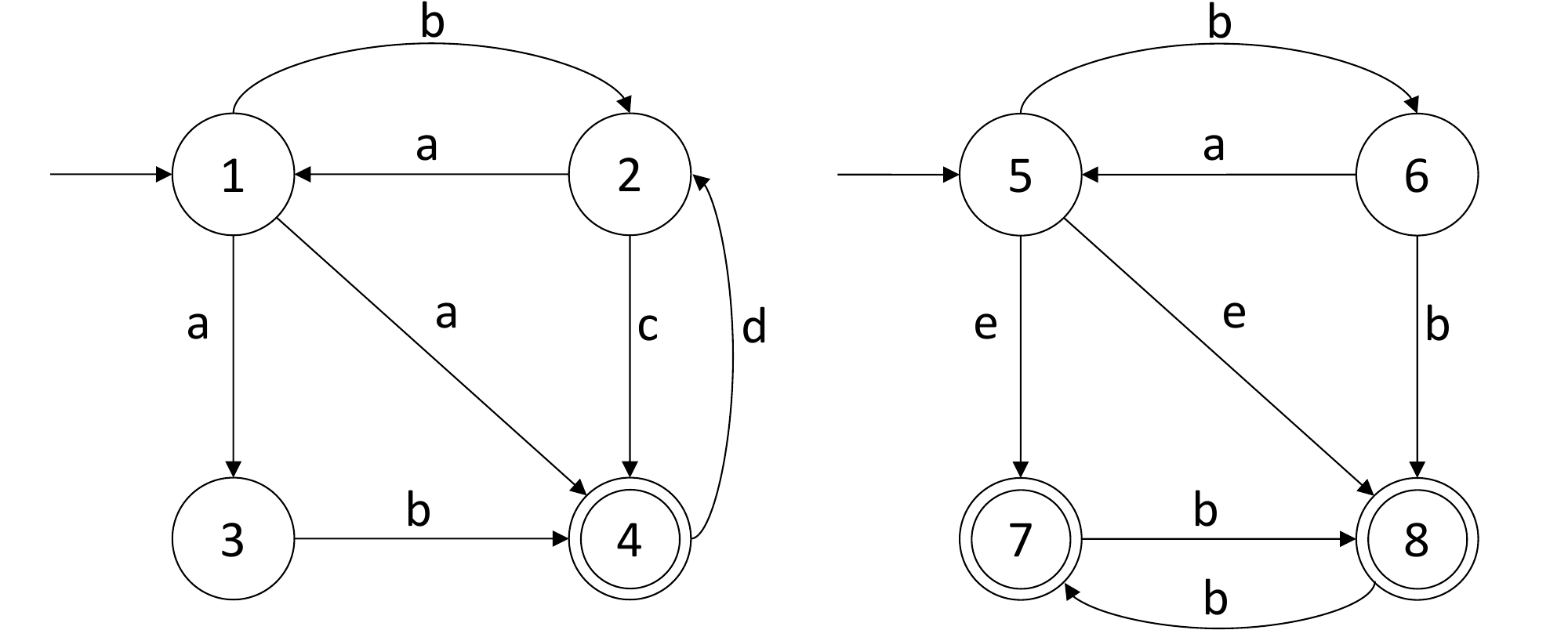}
\end{center}
\caption{FSM $M_1$ in the left and FSM $M_2$ in the right.}
\label{fig1}
\end{figure}

\subsection{Critical observability and observers}
\label{sec2.3} 

Critical observability corresponds to the possibility of
detecting whether the current state $x$ of a run of an FSM is or
is not critical on the basis of the information given by the corresponding
trace at state $x$:

\begin{definition}
\label{c_obs} An FSM $M=(X,X^{0},\Sigma,\delta,C)$ is critically observable
if $[\hat{\delta}(x^0,w) \subseteq C] \vee [\hat{\delta}(x^0,w) \subseteq
X\backslash C], $ for any initial state $x^0 \in X^{0}$ and any trace $w \in
L(M)$. 
\end{definition}

Any FSM $M$ having an initial state that is critical and another initial
state that is not critical, is never critically observable. Moreover, if $X^0=C$ then FSM $M$ is critically observable and no further analysis for the detection of critical states is needed. For these reasons
in the sequel we assume that $[\,X^{0}\subset C\,]\,\vee
\,[\,X^{0}\subseteq X\backslash C\,]$ for any FSM $M$. An illustrative
example follows.

\begin{example}
\label{example1} Consider FSMs $M_{i}=(X_{i},X_{i}^{0},\Sigma _{i},\delta
_{i},C_{i})$, $i=1,2$, depicted in Fig. \ref{fig1}, where $X_{1}=\{1,2,3,4\}$
, $X_{1}^{0}=\{1\}$, $\Sigma _{1}=\{a,b,c,d\}$, $C_{1}=\{4\}$, $
X_{2}=\{5,6,7,8\}$, $X_{2}^{0}=\{5\}$, $\Sigma _{2}=\{a,b,e\}$, $
C_{2}=\{7,8\}$ and transition maps $\delta _{1}$ and $\delta _{2}$ are
represented by labeled arrows in Fig. \ref{fig1}; labels on the arrows
represent the input label associated with the corresponding transition. FSM $
M_{1}$ is not critically observable because it is possible to reach both
noncritical state $3$ and critical state $4$ starting from the initial state 
$1$, by applying the same input label $a$. FSM $M_{2}$ is critically
observable because by applying traces $b(ab)^{\ast }$ and $b(ab)^{\ast }a$
to the initial state $5$, the state reached is always in $X_{2}\backslash
C_{2}$ while by applying any trace other than the previous ones, states
reached are always critical.
\end{example}

\begin{remark}
\label{remark1}
The notion of critical observability has been inspired by the notion of current location observability, e.g. \cite{BalluchiHSCC02}. We recall that an FSM $M$ is current location observable if there exists an integer $K$ such that for any run $x^{0}\rTo^{\sigma ^{1}}x^{1}\rTo
^{\sigma ^{2}}$ $x^{2}\rTo^{\sigma^{3}}x^{3}\rTo^{\sigma^{4}}\,...\rTo^{\sigma ^{k}}x^{k}$ of $M$ with length $k\geq K$, the current state $x^k$ of $M$ can be reconstructed on the basis of the sequence $\sigma^1 \sigma^2\, ... \, \sigma^k$. Current state observability is also termed strong detectability in e.g. \cite{Detectability:07}. 
While in current state observability the issue is to detect the current state from some step on, in critical observability the issue is to detect if the current state is or is not in a set of critical states. Moreover, we now show that critical observability does not imply and is not implied by current state observability. 
Consider the FSM $M_1=(X_1,X^{0}_1,\Sigma_1,\delta_1,C_1)$ where $X_1=\{0,1,2\}$, $X^{0}_1=\{0\}$, $\Sigma_1=\{a\}$, 
$\delta_1(0,a)=\{1,2\}$, $\delta_1(1,a)=\{2\}$, $\delta_1(2,a)=\{1\}$ and $C_1=\{1,2\}$: FSM $M_1$ is critically observable (starting from state $0$ with label $a$ only critical states $1$ and $2$ are reached) and is not current state observable (from the sequence $aa^\ast$ it is not possible to detect if the current state is $1$ or $2$). Conversely, consider the FSM $M_2=(X_2,X^{0}_2,\Sigma_2,\delta_2,C_2)$ where $X_2=\{0,1,2,3\}$, $X^{0}_2=\{0\}$, $\Sigma_2=\{a\}$, 
$\delta_2(0,a)=\{1,2\}$, $\delta_2(1,a)=\delta_2(2,a)=\{3\}$, $\delta_2(3,a)=\{3\}$ and $C_2=\{1\}$: FSM $M$ is not critically observable (starting from state $0$ with label $a$ both critical state $1$ and noncritical state $2$ are reached) and is current state observable with $K=2$ (after the second transition the current state will be $3$ for ever). 
\end{remark}

On--line detection of critical states of critically observable FSMs can be
obtained by means of critical observers, as defined hereafter.

\begin{definition}
\label{defobsabstract} A deterministic FSM $\mathrm{Obs}=(X_\mathrm{Obs},$ $
X^{0}_\mathrm{Obs},\Sigma_\mathrm{Obs},\delta_\mathrm{Obs},Y_\mathrm{Obs},H_
\mathrm{Obs}) $ with output set $Y_\mathrm{Obs}=\{0,1\}$ is a critical
observer for an FSM $M=(X,X^{0},\Sigma,\delta$, $C)$ if $\Sigma_\mathrm{Obs}
=\Sigma$ and for any state run $r: x^0 \rTo^{\sigma^1} x^1$ $\rTo^{\sigma^2}
x^2 \rTo^{\sigma^3} x^3 \, ... $ of $M$, the corresponding (unique) state
run $r_\mathrm{Obs}: z^0 \rTo^{\sigma_1} z^1 \rTo^{\sigma_2} z^2 \rTo
^{\sigma_3} z^3 \, ... $ of $\mathrm{Obs}$ is such that $H_\mathrm{Obs}
(z^i)=1$ if $x^i\in C$ and $H_\mathrm{Obs}(z^i)=0$ otherwise, 
for any state $z^i$ in $r_\mathrm{Obs}$.
\end{definition}

For later use, we report from e.g. \cite{Cassandras} the following
construction of observers.

\begin{definition}
\label{defobs} Given an FSM $M=(X,X^{0},\Sigma ,\delta ,C)$, define the
deterministic FSM $
\mathrm{Obs}(M)$ 
as the accessible part $\mathrm{Ac}(\mathrm{Obs}^{\prime })$ of $\mathrm{Obs}
^{\prime }=(X_{\mathrm{Obs}^{\prime }},X_{\mathrm{Obs}^{\prime }}^{0}$, $
\Sigma _{\mathrm{Obs}^{\prime }},\delta _{\mathrm{Obs}^{\prime }},$ $Y_{
\mathrm{Obs}^{\prime }},H_{\mathrm{Obs}^{\prime }})$, where $X_{\mathrm{Obs}
^{\prime }}=2^{X}$, $X_{\mathrm{Obs}^{\prime }}^{0}=\{X^{0}\}$, $\Sigma _{
\mathrm{Obs}^{\prime }}=\Sigma $, $\delta _{\mathrm{Obs}^{\prime }}:X_{
\mathrm{Obs}^{\prime }}\times \Sigma _{\mathrm{Obs}^{\prime }}\rightarrow
2^{X_{\mathrm{Obs}}^{\prime }}$ is defined by $\delta _{\mathrm{Obs}^{\prime
}}(z,\sigma )=\{\bigcup_{x\in z}\delta (x,\sigma )\}$, $Y_{\mathrm{Obs}
^{\prime }}=\{0,1\}$ and $H_{\mathrm{Obs}^{\prime }}(z)=1$ if $z\cap C\neq
\varnothing $ and $H_{\mathrm{Obs}^{\prime }}(z)=0$, otherwise. 
\end{definition}

An upper bound to the space and time computational complexity in
constructing $\mathrm{Obs}(M)$ for an FSM $M$ with $n=|X|$ states is $
O(2^{n})$ from which, an upper bound to the space and time computational
complexity in constructing $\mathrm{Obs}(\mathbf{M}(\mathcal{N}))$ is $
O(2^{2^{N\log (n_{\max })}})$. A direct consequence of Definitions \ref
{c_obs}, \ref{defobsabstract} and \ref{defobs} is the following:

\begin{proposition}
\label{thobs} The following statements are equivalent:\newline
(i) $M$ is critically observable;\newline
(ii) $\mathrm{Obs}(M)=(X_\mathrm{Obs},$ $X^{0}_\mathrm{Obs},\Sigma_\mathrm{
Obs},\delta_\mathrm{Obs},Y_\mathrm{Obs},H_\mathrm{Obs})$ is a critical
observer for $M$;\newline
(iii) For any $z\in X_{\mathrm{Obs}}$, if $H_{\mathrm{Obs}}(z)=1$ then $z
\subseteq C$. 
\end{proposition}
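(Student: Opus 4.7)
The strategy is to prove the cycle $(iii)\Rightarrow(ii)\Rightarrow(i)\Rightarrow(iii)$, with the determinism of $\mathrm{Obs}(M)$ doing most of the work. A preparatory induction on $|w|$, starting from $X_{\mathrm{Obs}^{\prime}}^{0}=\{X^{0}\}$ and using $\delta_{\mathrm{Obs}^{\prime}}(z,\sigma)=\{\bigcup_{x\in z}\delta(x,\sigma)\}$, shows that every accessible state of $\mathrm{Obs}(M)$ has the form $z_{w}:=\hat{\delta}(X^{0},w)$ for some $w\in L(M)$; in particular $H_{\mathrm{Obs}}(z_{w})=1$ iff $z_{w}\cap C\neq\varnothing$, and this bit depends on the trace $w$ alone.

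For $(iii)\Rightarrow(ii)$, I would walk along any state run $x^{0}\overset{\sigma^{1}}{\to}x^{1}\overset{\sigma^{2}}{\to}\cdots$ of $M$ together with its companion observer run $z^{i}=z_{\sigma^{1}\cdots\sigma^{i}}$ and use $x^{i}\in z^{i}$. If $x^{i}\in C$ then $z^{i}\cap C\neq\varnothing$ forces $H_{\mathrm{Obs}}(z^{i})=1$; if $x^{i}\notin C$ then $z^{i}\not\subseteq C$ and the contrapositive of (iii) gives $H_{\mathrm{Obs}}(z^{i})=0$. This matches Definition \ref{defobsabstract} verbatim.

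For $(ii)\Rightarrow(i)$, fix $x^{0}\in X^{0}$ and $w\in L(M)$ with $\hat{\delta}(x^{0},w)\neq\varnothing$ and set $b:=H_{\mathrm{Obs}}(z_{w})\in\{0,1\}$. For every $x\in\hat{\delta}(x^{0},w)$ there is a state run of $M$ ending at $x$ with trace $w$, whose companion observer run also terminates at $z_{w}$, so (ii) forces $b=1\Leftrightarrow x\in C$. Since $b$ is a single bit while $x$ ranges over $\hat{\delta}(x^{0},w)$, either all such $x$ lie in $C$ or none do, which is (i).

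For $(i)\Rightarrow(iii)$, take an accessible $z=z_{w}$ with $H_{\mathrm{Obs}}(z)=1$, choose $\bar x\in z\cap C$ and $\bar x^{0}\in X^{0}$ with $\bar x\in\hat{\delta}(\bar x^{0},w)$, and apply (i) to get $\hat{\delta}(\bar x^{0},w)\subseteq C$. The task is then to lift this to every $x^{\prime,0}\in X^{0}$, so as to conclude $z=\bigcup_{x^{\prime,0}\in X^{0}}\hat{\delta}(x^{\prime,0},w)\subseteq C$. I expect this to be the main obstacle: the per-initial-state reading of Definition \ref{c_obs} does not immediately forbid different initial states from sitting on opposite sides of the $C$/$X\setminus C$ partition after the same trace, so the standing assumption $[X^{0}\subseteq C]\vee[X^{0}\subseteq X\setminus C]$ must be invoked, most naturally by an induction on $|w|$ that uses the standing assumption as base case and (i) one symbol at a time in the inductive step to keep all branches on the same side of the partition.
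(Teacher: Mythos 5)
The paper offers no proof of this proposition---it is asserted to be a ``direct consequence'' of Definitions \ref{c_obs}, \ref{defobsabstract} and \ref{defobs}---so your proposal has to be judged on its own terms. Your preparatory observation that every accessible state of $\mathrm{Obs}(M)$ equals $z_w=\bigcup_{x^0\in X^0}\hat{\delta}(x^0,w)$ is right, and your arguments for $(iii)\Rightarrow(ii)$ and $(ii)\Rightarrow(i)$ are correct. The gap is exactly where you suspected it, in $(i)\Rightarrow(iii)$, and the repair you sketch does not close it: the induction on $|w|$ fails at the inductive step. Knowing that $z_w$ lies on one side of the partition and that each $\hat{\delta}(x^0,w\sigma)$ individually lies on one side gives no control over whether two \emph{different} initial states land on the \emph{same} side after $w\sigma$. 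Concretely, take $X=\{1,2,3,4\}$, $X^0=\{1,2\}$, $C=\{3\}$, $\Sigma=\{a\}$, $\delta(1,a)=\{3\}$, $\delta(2,a)=\{4\}$ and no other transitions. The standing assumption holds ($X^0\subseteq X\setminus C$), and for each initial state separately every set $\hat{\delta}(x^0,w)$ is a singleton, so the per--initial--state reading of Definition \ref{c_obs} is satisfied; yet $z_a=\{3,4\}$ is accessible, $H_{\mathrm{Obs}}(z_a)=1$ and $z_a\not\subseteq C$. Under that reading $(i)\Rightarrow(iii)$ is simply false, so no induction can save it.

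The resolution is that Definition \ref{c_obs} must be read with the universal quantifier over $x^0$ distributed into each disjunct, i.e.\ the requirement is $[\hat{\delta}(X^0,w)\subseteq C]\vee[\hat{\delta}(X^0,w)\subseteq X\setminus C]$ with $\hat{\delta}(X^0,w)=\bigcup_{x^0\in X^0}\hat{\delta}(x^0,w)$. This is the reading the paper itself presupposes when it asserts that an FSM with one critical and one noncritical initial state is never critically observable: under your per--initial--state reading that assertion is false, since $\hat{\delta}(x^0,\varepsilon)$ is always a singleton and hence always on one side. With the union reading, $(i)\Leftrightarrow(iii)$ is immediate from your observation that the accessible states of $\mathrm{Obs}(M)$ are precisely the sets $\hat{\delta}(X^0,w)$, the remaining two implications of your cycle go through with the obvious adjustment (in $(ii)\Rightarrow(i)$ let $x$ range over all of $z_w$ rather than over $\hat{\delta}(x^0,w)$ for a fixed $x^0$), and neither the standing assumption nor any induction is needed for this proposition.
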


\section{Design of decentralized critical observers}
\label{sec3bis} 

In this section, we first show that critical observability
of all the FSMs composing a network ensures the critical observability of
their parallel composition, i.e. of the network itself.  However, a network
of FSMs can be critically observable even though not all the FSMs composing
the network are critically observable. This means that for checking the
critical observability of a network we need to compose the FSMs, and this
may be problematic especially when dealing with large-scale networks where a 
large number of FSMs has to be composed. 

\begin{proposition}
\label{lemma2} If FSMs $M_1$ and $M_2$ are critically observable then FSM $
M_1||M_2$ is critically observable.
\end{proposition}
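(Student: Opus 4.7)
The plan is to fix an initial state of $M_1\|M_2$ and an arbitrary trace in its language, pull this trace back to the two component FSMs via projection onto their respective alphabets, and then invoke the critical observability of each $M_i$ through a simple case analysis.

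Concretely, fix $(x_1^0,x_2^0)\in X_{1,2}^0$ (so $x_i^0\in X_i^0$) and a trace $w\in L(M_1\|M_2)$. Let $w_i=P_{\Sigma_i}(w)$ for $i=1,2$. First I would establish by induction on the length of $w$ the inclusion
\begin{equation*}
\hat{\delta}_{1,2}\bigl((x_1^0,x_2^0),w\bigr)\;\subseteq\;\hat{\delta}_1(x_1^0,w_1)\,\times\,\hat{\delta}_2(x_2^0,w_2),
\end{equation*}
which follows directly from the four-case definition of $\delta'_{1,2}$ in Definition~\ref{composition}: a shared symbol advances both components, while a private symbol in $\Sigma_i\setminus\Sigma_j$ advances only $M_i$ and is erased by $P_{\Sigma_j}$. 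In particular $w_i\in L(M_i)$ for $i=1,2$, so the critical observability hypothesis can be applied to each.

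Next I would split into cases on the outputs of Definition~\ref{c_obs} applied to $M_1$ and $M_2$. If either $\hat{\delta}_1(x_1^0,w_1)\subseteq C_1$ or $\hat{\delta}_2(x_2^0,w_2)\subseteq C_2$, then every pair in the Cartesian product above has at least one component critical, so by the characterization of $C_{1,2}$ recalled right after Definition~\ref{composition} the entire set $\hat{\delta}_{1,2}((x_1^0,x_2^0),w)$ lies in $C_{1,2}$. Otherwise, $\hat{\delta}_1(x_1^0,w_1)\subseteq X_1\setminus C_1$ and $\hat{\delta}_2(x_2^0,w_2)\subseteq X_2\setminus C_2$; then every reachable pair has both components non-critical, and again by the same characterization the set avoids $C_{1,2}$ entirely. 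In either case the dichotomy required by Definition~\ref{c_obs} holds for $M_1\|M_2$.

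The only genuinely non-trivial step is the inductive projection claim, since one must carefully track the four branches of $\delta'_{1,2}$ and verify that, after taking the accessible part via $\mathrm{Ac}(\cdot)$, no new traces are introduced and projections behave as expected on the transition structure. Once this is in place, the case analysis is immediate and does not require any additional assumption (such as determinism) on $M_1$ or $M_2$.
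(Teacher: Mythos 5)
Your proof is correct and follows essentially the same route as the paper's: both hinge on the fact that states reached in $M_1\|M_2$ under a trace $w$ project to states reached in each $M_i$ under $P_{\Sigma_i}(w)$, and then invoke the component dichotomies of Definition~\ref{c_obs} together with the characterization of $C_{1,2}$. The paper merely phrases this contrapositively (two runs with a common trace reaching a critical and a non-critical state would force $M_1$ or $M_2$ to violate critical observability), whereas you state the product inclusion directly; the mathematical content is the same.
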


\begin{proof}
Set $M_i=\left(X_i,X_{i}^{0},
\Sigma_i,\delta_i,C_i\right)$, $i=1,2$. By contradiction, assume that $
M_1||M_2$ is not critically observable. Thus, there exists a pair of state
runs $r_1$ and $r_2$ with initial states $(x_1^0,x_2^0),(\overline{x}_1^0,
\overline{x}_2^0)\in X_{1,2}^{0}$ and common trace $w$ such that $[(x_1,x_2)
\in \hat{\delta}_{1,2}((x_1^0,x_2^0),w)]\wedge [(\overline{x}_1,\overline{x}
_2) \in \hat{\delta}_{1,2}((\overline{x}_1^0,\overline{x}_2^0),w))]$ and $
[(x_1,x_2)\in X_{1,2}\backslash C_{1,2}] \wedge [(\overline{x}_1,\overline{x}
_2)\in C_{1,2}]$. By definition of the projection operator $
P_{\Sigma_i}(\cdot)$ we then get $[ x_1 \in \hat{\delta}_{1}(x_1^0,P_{
\Sigma_1}(w))] \wedge [ \overline{x}_1 \in \hat{\delta}_{1}(\overline{x}
_1^0,P_{\Sigma_1}(w))] \wedge [ x_2 \in \hat{\delta}_{2}(x_2^0,P_{
\Sigma_2}(w))] \wedge [ \overline{x}_2 \in \hat{\delta}_{2}(\overline{x}
_2^0,P_{\Sigma_2}(w))]$. Moreover, by definition of $C_{1,2}$ we then get $
[[x_1 \in X_1 \backslash C_1 ] \wedge [ x_2 \in X_2 \backslash C_2 ]]\wedge
[[\overline{x}_1 \in C_1 ]\vee [ \overline{x}_2 \in C_2]]$. Hence, either $
M_1$ or $M_2$ is not critically observable and a contradiction holds.
\end{proof}

The converse implication is not true in general, as shown in the following example.

\begin{figure}[tbp]
\begin{center}
\includegraphics[scale=0.33]{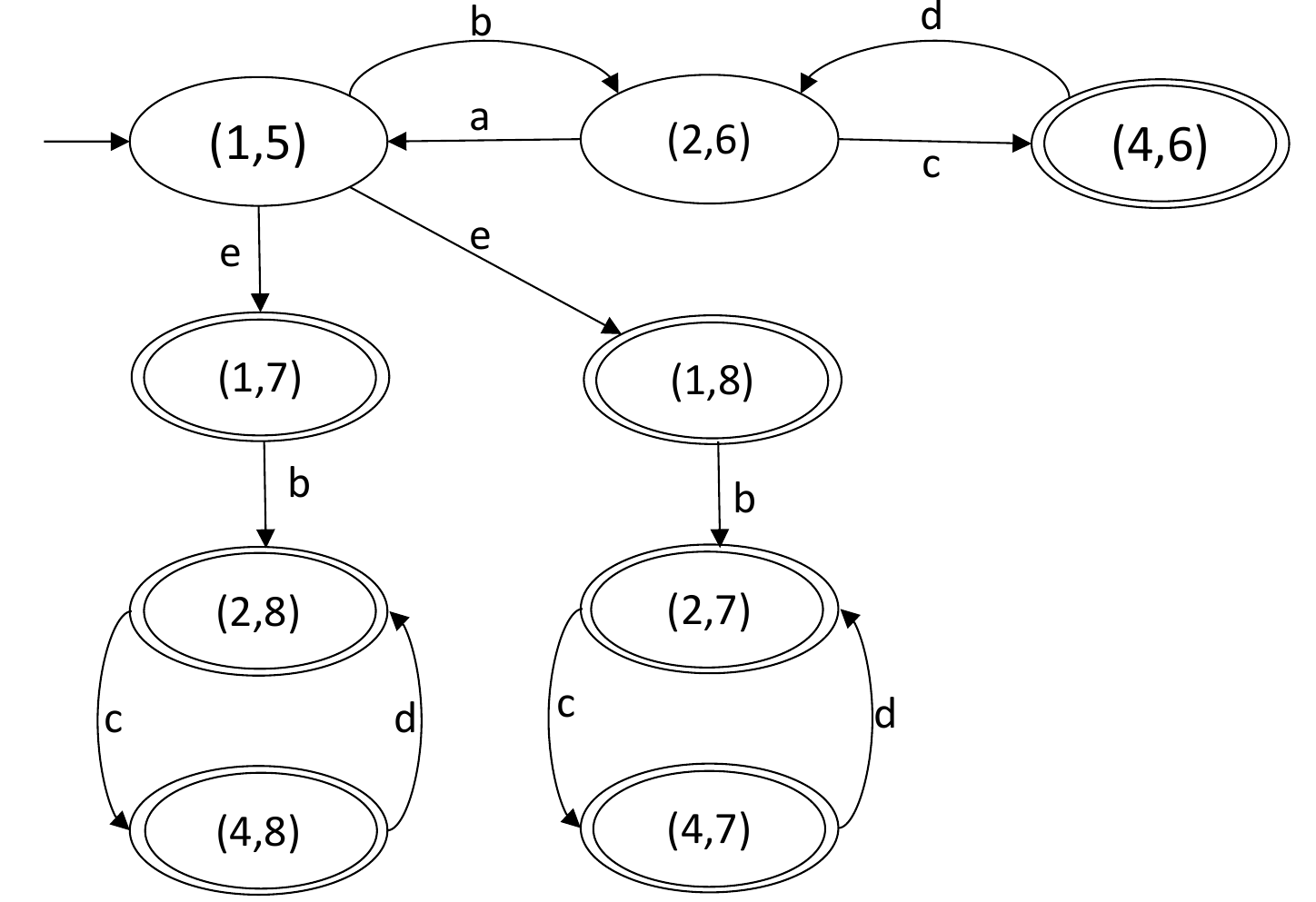}
\end{center}
\caption{Parallel composition $M_1 || M_2$ of FSMs $M_1$ and $M_2$.}
\label{fig3}
\end{figure}

\begin{example}
\label{example2} Consider FSMs $M_1$ and $M_2$ in Example \ref{example1} and
depicted in Fig. \ref{fig1}. As discussed in Example \ref{example1}, FSM $M_2
$ is critically observable while FSM $M_1$ is not. It is readily seen that
FSM $M_1 || M_2$, depicted in Fig. \ref{fig3}, is critically observable
because by applying traces $b((cd)^*ab)^*$ and $b(cd)^*a(b(cd)^*a)^*$ to the
initial state $(1,5)$, the state reached is always in $X_{1,2}\backslash
C_{1,2}$ and by applying any trace other than the previous ones, states
reached are always in $C_{1,2}$.
\end{example}

As a consequence, critical observability of each FSM composing $\mathcal{N}$
is not necessary for $\mathbf{M}(\mathcal{N})$ to be critically observable. 
We now show how decentralized observers can be used for
detecting critical states of the network of FSMs $\mathbf{M}(\mathcal{N})$.
The  notion of isomorphism will be used:

\begin{definition}
\label{DefIso} Two FSMs $M_{i}=(X_{i},X_{i}^0,$ $\Sigma_i,\delta_i,Y_i,H_i),$ $i=1,2$, are isomorphic, denoted $M_1=_{\mathrm{iso}} M_2$, if $
\Sigma_1=\Sigma_2$, $Y_1=Y_2$ and there exists a bijective function $\phi:
X_1 \rightarrow X_2$, such that:\newline
(i) $\phi(X_1^0)=X_2^0$;\newline
(ii) $\phi(X_1)=X_2$;\newline
(iii) $\phi(\delta_{1}(x_{1},\sigma))=\delta_{2}(\phi(x_{1}),\sigma)$, for
any accessible state $x_1\in X_1$ of $M_1$ and $\sigma \in \Sigma_1$;\newline
(iv) $H_1 (x_1) = H_2 (\phi(x_1))$, for any $x_1\in X_1$. 
\end{definition}

\begin{proposition}
\label{isocomp} Given FSMs $M_i$ ($i\in [1;4]$), if $M_1=_{\mathrm{iso}} M_2$
and $M_3=_{\mathrm{iso}} M_4$ then $M_1|| M_3 =_{\mathrm{iso}} M_2|| M_4$.
\end{proposition}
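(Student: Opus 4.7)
The plan is to exhibit an explicit bijection between the state spaces of $M_1 || M_3$ and $M_2 || M_4$ (the statement should read this way — isomorphism of parallel composition is functorial in each argument) and then verify the four conditions of Definition \ref{DefIso}. Let $\phi: X_1 \to X_2$ and $\psi: X_3 \to X_4$ be the bijections witnessing $M_1 =_{\iso} M_2$ and $M_3 =_{\iso} M_4$. Define $\Phi(x_1, x_3) = (\phi(x_1), \psi(x_3))$ from $X_1 \times X_3$ to $X_2 \times X_4$; it is a bijection as a product of bijections. Since $\Sigma_1 = \Sigma_2$ and $\Sigma_3 = \Sigma_4$ by hypothesis, the input alphabets of the two compositions coincide, $\Sigma_1 \cup \Sigma_3 = \Sigma_2 \cup \Sigma_4$, and the same holds for the output labels or (equivalently) for the $\{0,1\}$-valued critical-state indicators.

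Conditions (i) and (ii) of Definition \ref{DefIso} for $\Phi$ are immediate: $\Phi(X_1^0 \times X_3^0) = \phi(X_1^0) \times \psi(X_3^0) = X_2^0 \times X_4^0$ and likewise $\Phi(X_1 \times X_3) = X_2 \times X_4$. Condition (iv), preservation of outputs/critical states, follows componentwise: a state $(x_1, x_3)$ is critical for $M_1 || M_3$ exactly when $x_1 \in C_1$ or $x_3 \in C_3$, which by condition (iv) for $\phi$ and $\psi$ is equivalent to $\phi(x_1) \in C_2$ or $\psi(x_3) \in C_4$, i.e. $\Phi(x_1, x_3)$ being critical for $M_2 || M_4$.

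The main work, and the place to be careful, is condition (iii) on transitions. Here a case analysis on $\sigma$ is needed, following the four cases in Definition \ref{composition}: $\sigma \in \Sigma_1 \cap \Sigma_3$, $\sigma \in \Sigma_1 \setminus \Sigma_3$, $\sigma \in \Sigma_3 \setminus \Sigma_1$, and otherwise. In each case one rewrites the transition map of the product using the identity $\phi(\delta_1(x_1, \sigma)) = \delta_2(\phi(x_1), \sigma)$ (and the analogous one for $\psi$) supplied by condition (iii) on the component isomorphisms, together with the fact that $\phi$ and $\psi$ are bijections so that $\delta_1(x_1, \sigma) \neq \varnothing$ iff $\delta_2(\phi(x_1), \sigma) \neq \varnothing$, and similarly $\phi(\{x_1\}) = \{\phi(x_1)\}$ for the singleton cases. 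This is the main obstacle, though it is routine rather than deep: every clause matches by inspection after substitution.

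Finally, since Definition \ref{composition} applies the accessible-part operator $\Ac(\cdot)$ after forming the raw product, one must observe that $\Phi$ restricts to a bijection between the accessible parts. This is a consequence of what has already been shown: $\Phi$ preserves initial states and commutes with the transition map of the raw product, so by induction on trace length $\Phi$ maps accessible states of $M_1||M_3$ bijectively to accessible states of $M_2||M_4$ (the inverse direction uses $\Phi^{-1} = (\phi^{-1}, \psi^{-1})$ in exactly the same way). The restricted bijection then satisfies all four isomorphism conditions, completing the proof.
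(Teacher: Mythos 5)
The paper states Proposition \ref{isocomp} without proof (it is not among the results proved in the Appendix), so there is no authorial argument to compare against; your write-up supplies the missing proof and does so correctly. Two points are worth making explicit. First, you are right that the statement as printed is a typo: literally, $M_1=_{\iso}M_2$ and $M_3=_{\iso}M_4$ do not imply $M_1\Vert M_2=_{\iso}M_3\Vert M_4$ (take $M_1=M_2$ with one state and $M_3=M_4$ with two), and the only reading consistent with its use in the proof of Theorem \ref{thObsDistr} --- where it propagates $\mathrm{Obs}(M_1)\Vert\mathrm{Obs}(M_2\Vert\cdots)=_{\iso}\mathrm{Obs}(M_1)\Vert\mathrm{Obs}(M_2)\Vert\mathrm{Obs}(\cdots)$ through the recursion --- is the functorial one, $M_1\Vert M_3=_{\iso}M_2\Vert M_4$, which is what you prove. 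Second, a small refinement on condition (iii): Definition \ref{DefIso} guarantees $\phi(\delta_1(x_1,\sigma))=\delta_2(\phi(x_1),\sigma)$ only for \emph{accessible} $x_1$, so in your case analysis you should note that every accessible state $(x_1,x_3)$ of $M_1\Vert M_3$ has accessible components (project the reaching word onto $\Sigma_1$, resp.\ $\Sigma_3$); your concluding induction on trace length for the $\Ac(\cdot)$ step already contains the argument needed here, so this is a matter of ordering the steps rather than a gap. With those remarks, the product bijection $\Phi=(\phi,\psi)$, the componentwise verification of (i), (ii), (iv), the four-case check of (iii) using preservation of nonemptiness of $\delta_i(\cdot,\sigma)$ under bijections, and the restriction to accessible parts together constitute a complete and correct proof, in the same explicit-bijection style the authors use for Theorem \ref{thObsDistr}.
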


Given $\mathcal{N}$, consider the collection of deterministic FSMs 
\begin{equation}
\mathrm{Obs}(M_{i})=(X_{\mathrm{Obs},i},X_{\mathrm{Obs},i}^{0},\Sigma
_{i},\delta _{\mathrm{Obs},i},Y_{\mathrm{Obs},i},H_{\mathrm{Obs},i}),
\label{ObsMi}
\end{equation}
each associated to the FSM $M_{i}$ and define the decentralized observer $
\mathrm{Obs}^{d}(\mathcal{N})$ as the FSM 
\begin{equation*}
\mathrm{Obs}(M_{1})||\mathrm{Obs}(M_{2})||...||\mathrm{Obs}(M_{N})
\end{equation*}
with output set $Y_{\mathrm{Obs}^{d}}=\{0,1\}$ and output function $H_{
\mathrm{Obs}^{d}}(z_{1},z_{2},...,z_{N})=\bigvee_{i\in \lbrack 1;N]}H_{
\mathrm{Obs}_{i}}(z_{i})$. The following result shows that the decentralized
observer $\mathrm{Obs}^{d}(\mathcal{N})$ can be used for detecting on--line
critical states of the FSM $\mathbf{M}(\mathcal{N})$. 

\begin{theorem}
\label{thObsDistr} $\mathrm{Obs}^d(\mathcal{N})=_{\mathrm{iso}}\mathrm{Obs}(
\mathbf{M}(\mathcal{N}))$. 
\end{theorem}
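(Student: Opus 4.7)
The plan is to prove the theorem by induction on $N$, reducing it to the binary case $N=2$ via associativity of parallel composition (Proposition \ref{Com+Ass-prop}) and the preservation of isomorphism under parallel composition (Proposition \ref{isocomp}). For the inductive step, setting $\mathcal{N}' = \{M_1, \ldots, M_{N-1}\}$, associativity gives $\mathbf{M}(\mathcal{N}) = \mathbf{M}(\mathcal{N}') || M_N$, and by definition $\mathrm{Obs}^d(\mathcal{N}) = \mathrm{Obs}^d(\mathcal{N}') || \mathrm{Obs}(M_N)$; combining the inductive hypothesis with Proposition \ref{isocomp} reduces the task to the binary claim $\mathrm{Obs}(M') || \mathrm{Obs}(M'') =_{\mathrm{iso}} \mathrm{Obs}(M' || M'')$. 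The output $H_{\mathrm{Obs}^d}$ unfolds into nested disjunctions, so matching the binary outputs propagates straightforwardly to all $N$ components.

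For the binary case, I define
\[
\phi(z_1, z_2) = z_1 \times z_2
\]
as the candidate bijection between the state sets of $\mathrm{Obs}(M_1) || \mathrm{Obs}(M_2)$ and $\mathrm{Obs}(M_1 || M_2)$, and verify the four conditions of Definition \ref{DefIso}. Alphabets ($\Sigma_1 \cup \Sigma_2$) and output sets ($\{0,1\}$) coincide by construction. Condition (i) is immediate since both observers have singleton initial state sets $\{(X_1^0, X_2^0)\}$ and $\{X_1^0 \times X_2^0\}$. Condition (iv) reduces to the identity $H_{\mathrm{Obs}(M_1||M_2)}(z_1 \times z_2) = H_{\mathrm{Obs}(M_1)}(z_1) \vee H_{\mathrm{Obs}(M_2)}(z_2)$, which follows from the description of critical states $C_{1,2} = (C_1 \times X_2) \cup (X_1 \times C_2)$ in Definition \ref{composition} together with the fact that accessible local observer states $z_1, z_2$ are nonempty.

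The core of the argument is condition (iii), together with the bijectivity needed in (ii). I would prove by induction on trace length that every accessible state of $\mathrm{Obs}(M_1||M_2)$ has the Cartesian-product form $z_1 \times z_2$ for some accessible $(z_1, z_2)$ of $\mathrm{Obs}(M_1) || \mathrm{Obs}(M_2)$, and that transitions agree under $\phi$. The key computation, for a shared event $\sigma \in \Sigma_1 \cap \Sigma_2$, is the componentwise unfolding using Definitions \ref{composition} and \ref{defobs},
\[
\bigcup_{(x_1, x_2) \in z_1 \times z_2} \delta^{\prime}_{1,2}((x_1, x_2), \sigma) = \Bigl(\bigcup_{x_1 \in z_1} \delta_1(x_1, \sigma)\Bigr) \times \Bigl(\bigcup_{x_2 \in z_2} \delta_2(x_2, \sigma)\Bigr),
\]
which is precisely $\phi$ applied to $(\delta_{\mathrm{Obs}(M_1)}(z_1, \sigma), \delta_{\mathrm{Obs}(M_2)}(z_2, \sigma))$; the asymmetric cases $\sigma \in \Sigma_i \setminus \Sigma_{3-i}$ yield $\delta_i(z_i, \sigma) \times z_{3-i}$ on both sides. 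The main obstacle I anticipate is the bookkeeping around empty-image transitions: a shared event must fire on both sides under exactly the same enabling conditions (namely both $\delta_1(z_1, \sigma)$ and $\delta_2(z_2, \sigma)$ nonempty), and one must verify that restricting to the accessible parts (implicit in the $\mathrm{Ac}(\cdot)$ operators in Definitions \ref{composition} and \ref{defobs}) forces $\phi(z_1, z_2) = \phi(z_1', z_2')$ to imply $(z_1, z_2) = (z_1', z_2')$, so that $\phi$ really yields a bijection rather than a mere surjection.
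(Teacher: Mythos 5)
Your proposal follows essentially the same route as the paper's proof: establish the binary identity $\mathrm{Obs}(M_1)\,||\,\mathrm{Obs}(M_2)=_{\mathrm{iso}}\mathrm{Obs}(M_1||M_2)$ via the map $\phi(z_1,z_2)=z_1\times z_2$, verify the four conditions of Definition \ref{DefIso} (with an induction over accessible states for the transition maps, split into the three cases for $\sigma$), and then extend to $N$ machines by associativity and Proposition \ref{isocomp}. The details you flag as potential obstacles (output matching via $C_{1,2}=(C_1\times X_2)\cup(X_1\times C_2)$, the enabling conditions for shared events, injectivity of $\phi$ on accessible states) are handled the same way, or in the case of condition (iv) slightly more carefully, than in the paper.
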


\begin{proof}
We start by showing the result for the network $\mathcal{N}^{\prime
}=\{M_1,M_2\}$ where $M_i=(X_i,X^{0}_i,\Sigma_i,\delta_i,C_i )$, i.e. 
\begin{equation}  \label{aqw}
\mathrm{Obs}(M_1)||\mathrm{Obs}(M_2)=_{\mathrm{iso}} \mathrm{Obs}(M_1||M_2). 
\end{equation}
Let be $\mathrm{Obs}^{d}(\mathcal{N}^{\prime })=(X_{\mathrm{Obs}^{d}},X^{0}_{
\mathrm{Obs}^{d}},\Sigma_{\mathrm{Obs}^{d}},\delta_{\mathrm{Obs}^{d}},Y_{
\mathrm{Obs}^{d}},$ $H_{\mathrm{Obs}^{d}})$ and $\mathrm{Obs}(\mathbf{M}(
\mathcal{N}^{\prime }))=(X_{\mathrm{Obs}},X^{0}_{\mathrm{Obs}},\Sigma_{
\mathrm{Obs}},\delta_{\mathrm{Obs}},$ $Y_{\mathrm{Obs}},H_{\mathrm{Obs}})$.
Define $\phi:X_{\mathrm{Obs}^{d}} \rightarrow X_{\mathrm{Obs}}$ such that $
\phi((z_1,z_2))=z_1\times z_2$, for any $(z_1,z_2)\in X_{\mathrm{Obs}^{d}}$
with $z_i \in X_{\mathrm{Obs},i}$. First of all note that $\Sigma_{\mathrm{
Obs}^{d}}=\Sigma_{\mathrm{Obs}}=\Sigma_1 \cup \Sigma_2$ and $Y_{\mathrm{Obs}
^{d}}=Y_{\mathrm{Obs}}=\{0,1\}$. Moreover, with reference to Definition \ref
{DefIso}, we get:\newline
\textit{Condition (i).} 
By Definitions \ref{composition} and \ref{defobs} we get $\phi(X^{0}_{
\mathrm{Obs}^{d}}) = \phi(X^{0}_{\mathrm{Obs},1}\times X^{0}_{\mathrm{Obs}
,2}) = \phi(\{X^0_1\}\times \{X^0_2\}) = \phi(\{(X^0_1,X^0_2)\}) =
\{\phi((X^0_1,X^0_2))\} = \{X^0_1 \times X^0_2\} =X^0_{\mathrm{Obs}}$. 
\newline
\textit{Conditions (ii) and (iii).} We proceed by induction and show that if 
$\phi((z_1,z_2))=z_1\times z_2$ for a pair of accessible states $
(z_1,z_2)\in X_{\mathrm{Obs}^d}$ and $z_1 \times z_2 \in X_{\mathrm{Obs}}$
then $\phi(\delta_{\mathrm{Obs}^d}((z_1,z_2),\sigma))=\delta_{\mathrm{Obs}
}(\phi((z_1,z_2)),\sigma)$, for any $\sigma \in \Sigma_1\cup \Sigma_1$. With
reference to Definition \ref{composition}, we have three cases: (case 1) $
\sigma \in \Sigma_1 \cap \Sigma_2 $, (case 2) $\sigma \in \Sigma_1
\backslash \Sigma_2$, and (case 3) $\sigma \in \Sigma_2 \backslash \Sigma_1$. 
We start with case 1. By Definitions \ref{composition} and \ref{defobs} we
get $\delta_{\mathrm{Obs}}(\phi((z_1,z_2)),\sigma) = \delta_{\mathrm{Obs}
}(z_1 \times z_2,\sigma)= \{\bigcup_{(x_1,x_2)\in z_1 \times
z_2}\delta_{1,2}((x_1,x_2),\sigma) \}= \{\bigcup_{(x_1,x_2)\in z_1 \times
z_2}\delta_{1}(x_1,\sigma)\times\delta_{2}(x_2,\sigma) \} =
\{\bigcup_{x_1\in z_1}\delta_{1}(x_1,\sigma)$ $\times \bigcup_{x_2 \in
z_2}\delta_{2}(x_2,\sigma) \}= \{\phi(\bigcup_{x_1\in
z_1}\delta_{1}(x_1,\sigma), \bigcup_{x_2 \in z_2}\delta_{2}(x_2,$ $\sigma))
\}= \phi(\{(\bigcup_{x_1\in z_1}\delta_{1}(x_1,\sigma), \bigcup_{x_2 \in
z_2}\delta_{2}(x_2,\sigma)) \} )= \phi(\{\bigcup_{x_1\in
z_1}\delta_{1}(x_1,\sigma)\} \times \{\bigcup_{x_2 \in
z_2}\delta_{2}(x_2,\sigma)\} )= \phi(\delta_{\mathrm{Obs},1}(z_1,$ $\sigma)
\times \delta_{\mathrm{Obs},2}(z_2,\sigma) ) = \phi(\delta_{\mathrm{Obs}
^{d}}((z_1,z_2),\sigma))$. 
Cases 2 and 3 can be shown by using similar arguments. \newline
\textit{Condition (iv).} Suppose $H_{\mathrm{Obs}^d}(z_1,z_2)=1$. By
definition of $H_{\mathrm{Obs}^d}(.)$ we get that $[H_{\mathrm{Obs}
,1}(z_1)=1] \vee [H_{\mathrm{Obs},2}(z_2)=1]$, or by Definition \ref{defobs}
equivalently that $[z_1 \cap C_1 \neq \varnothing]\vee[z_2 \cap C_2 \neq
\varnothing]$. By definition of the set $C_{1,2}$, the last conditions imply that $z_1 \times z_2 \cap C_{1,2}\neq \varnothing$ which by Definition \ref{defobsabstract}, implies $H_{\mathrm{Obs}}(z_1\times z_2)=1$.
Suppose now $H_{\mathrm{Obs}^d}(z_1,z_2)=0$
. By definition of $H_{\mathrm{Obs}^d}(.)$ we get that $[H_{\mathrm{Obs}
,1}(z_1)=0] \wedge [H_{\mathrm{Obs},2}(z_2)=0]$, or by Definition \ref
{defobs} equivalently that $[z_1 \cap C_1 = \varnothing]\wedge [z_2 \cap C_2
= \varnothing]$. The last conditions imply that $z_1 \times z_2 \cap C_{1,2}
= \varnothing$ which, by Definition \ref{defobs}, implies $H_{\mathrm{Obs}
}(z_1\times z_2)=0$. \newline
Hence, the isomorphic equivalence in (\ref{aqw}) is proven. We now
generalize (\ref{aqw}) to the case of a generic network $\mathcal{N}
=\{M_1,M_2,...,M_N\}$. By applying recursively the equivalence in (\ref{aqw}) and by Proposition \ref{isocomp}, we get 
$\mathrm{Obs}(\mathbf{M}(\mathcal{N})) = \mathrm{Obs}
(M_1||(M_2||M_3||...||M_{N})) =_{\mathrm{iso}} \mathrm{Obs}(M_1)||\mathrm{Obs
}(M_2||(M_3||...||M_{N})) =_{\mathrm{iso}} \mathrm{Obs}(M_1)||$ $\mathrm{Obs}
(M_2)||\mathrm{Obs}(M_3||...||M_{N}) =_{\mathrm{iso}} ... =_{\mathrm{iso}} 
\mathrm{Obs}(M_1)||$ $\mathrm{Obs}(M_2)||...||\mathrm{Obs}(M_{N}) = \mathrm{
Obs}^d(\mathcal{N})$. 
\end{proof}

In the sequel, we will refer to the FSM $\mathrm{Obs}^{d}(\mathcal{N})$
satisfying condition (iii) of Proposition \ref{thobs} as a decentralized
critical observer for $\mathcal{N}$. Fig. \ref{fig7} shows a possible
implementation architecture for $\mathrm{Obs}^{d}(\mathcal{N})$. Observer $\mathrm{Obs}^{d}(
\mathcal{N})$ can be obtained as a bank on $N$ local observers $\mathrm{Obs}
(M_{i})$ that act asynchronously. Each local observer $\mathrm{Obs}(M_{i})$
takes as input the trace $w_{i}\in L(M_{i})$ generated by $M_{i}$ in
response to the input word $w$, and sends the output boolean values $y_{i}$
to the OR (static) block. The OR block acts as a coordinator and whenever it
receives one or more boolean values $y_{i}$ as inputs, it outputs boolean
value $y$ as the logical operation \textit{or} among $y_{i}$. Note that this
architecture does not require the explicit construction of the parallel
composition of local observers $\mathrm{Obs}(M_{i})$. \newline
Since space and time computational complexity in constructing $\mathrm{Obs}
^{d}(\mathcal{N})$ is $O(2^{n_{\max }N})$, a direct consequence of Theorem 
\ref{thObsDistr} is:

\begin{corollary}
\label{corogio} An upper bound to space and time computational complexity in constructing $
\mathrm{Obs}(\mathbf{M}(\mathcal{N}))$ is $O(2^{n_{\max}N})$.
\end{corollary}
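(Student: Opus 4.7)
The plan is to deduce the bound directly from Theorem \ref{thObsDistr}, which gives $\mathrm{Obs}(\mathbf{M}(\mathcal{N})) =_{\mathrm{iso}} \mathrm{Obs}^{d}(\mathcal{N})$. Because isomorphism preserves the size of the state set (the bijection $\phi$ from Definition \ref{DefIso}) and the structure of the transition and output maps, one can construct $\mathrm{Obs}(\mathbf{M}(\mathcal{N}))$ up to isomorphism by instead constructing $\mathrm{Obs}^{d}(\mathcal{N})$. So it suffices to establish the complexity bound for the latter, and then invoke the theorem.

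The first step is to bound the cost of building each local observer $\mathrm{Obs}(M_{i})$. By Definition \ref{defobs}, $\mathrm{Obs}(M_{i})$ is the accessible part of a deterministic FSM whose state set is $2^{X_{i}}$, so by the standard complexity remark given right after Definition \ref{defobs}, each $\mathrm{Obs}(M_{i})$ is built in space and time $O(2^{|X_{i}|})\subseteq O(2^{n_{\max}})$. Running this over all $N$ FSMs in $\mathcal{N}$ contributes $O(N\,2^{n_{\max}})$, which is absorbed by the next step.

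The second step is to compose the local observers via the parallel composition $\mathrm{Obs}(M_{1})\|\mathrm{Obs}(M_{2})\|\cdots\|\mathrm{Obs}(M_{N})$. By Definition \ref{composition}, the state space of the composition lives inside the Cartesian product of the local state spaces, which has at most $(2^{n_{\max}})^{N}=2^{n_{\max}N}$ elements; the output function $H_{\mathrm{Obs}^{d}}$ is a simple OR over $N$ bits, evaluated at each composed state in $O(N)$ time. Hence the space and time cost of constructing $\mathrm{Obs}^{d}(\mathcal{N})$, including the overhead of the OR aggregation, is dominated by $O(2^{n_{\max}N})$, which is exactly the bound stated in the text right before the corollary.

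Combining the two observations, $\mathrm{Obs}(\mathbf{M}(\mathcal{N}))$ can be built (up to isomorphism) in space and time $O(N\,2^{n_{\max}})+O(2^{n_{\max}N})=O(2^{n_{\max}N})$, giving the claim. There is essentially no obstacle here: the work was already carried out in Theorem \ref{thObsDistr}; the only mild subtlety is noting that an isomorphic copy of $\mathrm{Obs}(\mathbf{M}(\mathcal{N}))$ is all that one needs for the complexity statement, since the bound refers to constructibility and not to a canonical representation of the observer. This is precisely why the much worse $O(2^{2^{N\log(n_{\max})}})$ bound from the direct monolithic construction in Definition \ref{defobs} can be avoided.
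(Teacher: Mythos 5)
Your argument is correct and follows exactly the route the paper takes: the corollary is stated as a direct consequence of Theorem \ref{thObsDistr} together with the observation that $\mathrm{Obs}^{d}(\mathcal{N})$ is constructible in $O(2^{n_{\max}N})$ space and time. Your elaboration of that second ingredient (each local observer costs $O(2^{n_{\max}})$, and the composed state space has at most $(2^{n_{\max}})^{N}=2^{n_{\max}N}$ elements) is a correct filling-in of details the paper leaves implicit.
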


\begin{figure}[tbp]
\begin{center}
\includegraphics[scale=0.33]{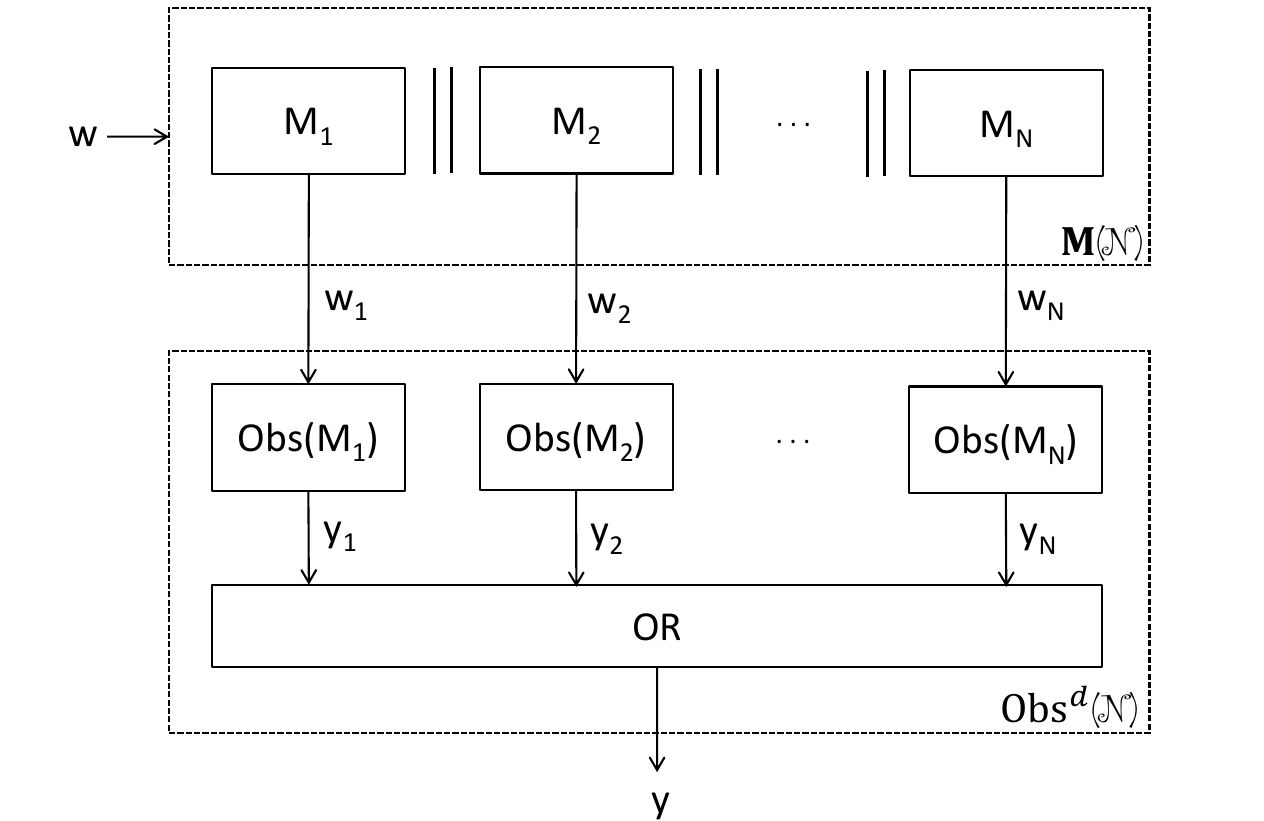}
\end{center}
\caption{A possible architecture for the decentralized observer $\mathrm{Obs}
^{d}(\mathcal{N})$.}
\label{fig7}
\end{figure}

Note that the above upper bound is lower 
than $O(2^{2^{N\log (n_{\max })}})$. \\
From Proposition \ref{thobs} and Theorem \ref{thObsDistr}, one way to check critical observability and to design decentralized observers of $
\mathbf{M}(\mathcal{N})$ is illustrated in Algorithm \ref{alg1}.

\begin{algorithm}
\caption{Check of critical observability of $\mathbf{M}(\mathcal{N})$.}
\label{alg1}
\scriptsize
\begin{algorithmic}[1]
\STATE Construct $N$ local observers $\Obs(M_i)$;
\STATE Compose the $N$ local observers $\Obs(M_i)$ to get $\Obs^{d}(\mathcal{N})$;
\STATE Apply Proposition \ref{thobs} to $\Obs^{d}(\mathcal{N})$.
\end{algorithmic}
\end{algorithm}

Algorithm \ref{alg1} has space and time computational complexity $O(2^{n_{\max }N})$. 
It can be improved from the computational point of view, because:\newline
(D1) It constructs the whole local observer $\mathrm{Obs}(M_{i})$ for each $
M_{i}$. A more efficient algorithm would construct, for each $M_{i}$, only
the sub--FSM of $\mathrm{Obs}(M_{i})$ that is interconnected with the other
local observers in $\mathrm{Obs}^{d}(\mathcal{N})$. \newline
(D2) It constructs the whole observer $\mathrm{Obs
}^{d}(\mathcal{N})$, which is not required at the implementation layer (see
Fig. \ref{fig7}). A more efficient algorithm would check critical
observability on the basis of local observers. \newline
(D3) It first constructs $\mathrm{Obs}^{d}(\mathcal{N})$ before checking if
states $z$ of $\mathrm{Obs}^{d}(\mathcal{N})$ satisfy condition (iii) of
Proposition \ref{thobs}. A more efficient algorithm would conclude that $
\mathcal{N}$ is not critically observable when the first state $z$ not
satisfying condition (iii) of Proposition \ref{thobs}, shows up.\newline
In order to cope with the aforementioned drawbacks, we now present a
procedure that \textit{integrates each step of Algorithm \ref{alg1} in one
algorithm}. This procedure extends on--the--fly algorithms for 
verification and control of FSMs (see e.g. \cite{onthefly3,onthefly2}) 
to the design of decentralized critical observers and
is reported in Algorithm \ref{alg}. \\
\begin{algorithm}
\caption{Integrated design of decentralized observers}
\label{alg}
\scriptsize
\begin{algorithmic}[1]
\STATE \textbf{Input:} FSMs $M_i=(X_i,X^0_i,\Sigma_i,\delta_i,C_i)$, with $i\in [1;N]$;
\STATE \textbf{Init:} $X_{\Obs,i}:=\{X_i^0\}$, $X^{0}_{\Obs,i}:=\{X_i^0\}$, $Y_{\Obs,i}:=\{0,1\}$ for any $i\in [1;N]$; $X_{\Obs}:=\{(X_1^0,X_2^0,...,X_N^0)\}$; $X_{\Obs}^{\temp}:=X_{\Obs}$;
\WHILE{$X_{\Obs}^{\temp}\neq\varnothing$}
\STATE $Z_{\Obs}^{\temp}:=\varnothing$
\FORALL{$(z_1,z_2,...,z_N) \in X_{\Obs}^{\temp}$}
\FORALL{$\sigma \in \Sigma_1 \cup \Sigma_2 \cup ... \cup \Sigma_N$}
\FORALL{$i\in[1;N]$}
\IF{$\delta_i(z_i,\sigma)$\textbf{ is defined}}
\STATE $z_i^+ :=\delta_i(z_i,\sigma)$;
\ELSE
\STATE $z_i^+ :=z_i$;
\ENDIF
\ENDFOR
\IF{$\delta_{\Obs^d}((z_1,z_2,...,z_N),\sigma)=\{(z_1^+,z_2^+,...,z_N^+)\}$}
\IF{$(z_1^+,z_2^+,...,z_N^+)\notin X_{\Obs}$}
\IF{$[z_1^+ \times z_2^+ \times ... \times z_N^+\nsubseteq C_{1,2,...,N}] \wedge [z_1^+ \times z_2^+ \times ... \times z_N^+\nsubseteq X_{1,2,...,N}\backslash C_{1,2,...,N}]$}
\STATE BREAK: $\mathbf{M}(\mathcal{N})$ is not critically observable;
\ENDIF
\STATE $Z_{\Obs}^{\temp}:=Z_{\Obs}^{\temp}\cup \{(z_1^+,z_2^+,...,z_N^+)\}$;
\FORALL{$i\in [1;N]$}
\IF{$[z_i^+\neq \varnothing ] \wedge [z_i^+\notin X_{\Obs,i}]$}
\STATE $X_{\Obs,i}:=X_{\Obs,i} \cup\{z_i^+\}$;
\STATE $\delta_{\Obs,i}(z_i,\sigma):=\{z_i^+\}$;
\IF{$z_i^+\subseteq C_i$}
\STATE $H_{\Obs,i}(z_i^+):=1$;
\ELSE
\STATE $H_{\Obs,i}(z_i^+):=0$;
\ENDIF
\ENDIF 
\ENDFOR
\ENDIF
\ENDIF
\ENDFOR
\ENDFOR
\STATE $X_{\Obs}:=X_{\Obs}\cup Z_{\Obs}^{\temp}$, $X_{\Obs}^{\temp}:=Z_{\Obs}^{\temp}$;
\ENDWHILE
\STATE \textbf{output:} $\mathbf{M}(\mathcal{N})$ is critically observable;\\Projected local observers $\pi|_{\Obs(M_i)}(\Obs^d(\mathcal{N}))=(X_{\Obs,i},X^{0}_{\Obs,i},\Sigma_i,\delta_{\Obs,i},Y_{\Obs,i},H_{\Obs,i})$.
\end{algorithmic}
\end{algorithm}
Algorithm \ref{alg} makes use of the notion of projected
local observers. The projection $\pi |_{\mathrm{Obs}(M_{i})}(\mathrm{Obs}
^{d}(\mathbf{M}(\mathcal{N})))$ of $\mathrm{Obs}^{d}(\mathbf{M}(\mathcal{N}))
$ onto $\mathrm{Obs}(M_{i})$, as in (\ref{ObsMi}), is defined as the FSM $
\mathrm{Ac}(X_{\mathrm{Obs},i}^{\prime },X_{\mathrm{Obs},i}^{0},\Sigma
_{i},\delta _{\mathrm{Obs},i},Y_{\mathrm{Obs},i},H_{\mathrm{Obs},i}$) where $
X_{\mathrm{Obs},i}^{\prime }$ contains states $z_{i}\in X_{\mathrm{Obs},i}$
for which there exist states $z_{j}\in X_{\mathrm{Obs},j}$, $j\in \lbrack
1;N],j\neq i$ such that $(z_{1},z_{2},...,z_{N})$ is a state of $\mathrm{Obs}
^{d}(\mathbf{M}(\mathcal{N}))$. \newline
The input of Algorithm \ref{alg} is the collection of FSMs $M_{i}$ of $
\mathcal{N}$. The output is the collection of projected local observers $\pi
|_{\mathrm{Obs}(M_{i})}(\mathrm{Obs}^{d}(\mathcal{N}))$ if $\mathbf{M}(
\mathcal{N})$ is critically observable. In line 2, the initial state and the output set
of the projected local observers are defined and their sets of
states $X_{\mathrm{Obs},i}$ are initialized to contain only the initial
states. At each iteration, the
algorithm processes candidate new states of $\pi |_{\mathrm{Obs}(M_{i})}(
\mathrm{Obs}^{d}(\mathcal{N}))$ and adds them to $X_{\mathrm{Obs},i}$
whenever they are compatible with the parallel composition of $\pi |_{
\mathrm{Obs}(M_{j})}(\mathrm{Obs}^{d}(\mathcal{N}))$ with $j\neq i$. For
each aggregate $(z_{1},z_{2},...,z_{N})$ in the temporary set $X_{\mathrm{Obs
}}^{\mathrm{temp}}$ and for each $\sigma \in \Sigma _{1}\cup \Sigma _{2}\cup
...\cup \Sigma _{N}$ (lines 5 and 6), first, successors $z_{i}^{+}$ of
states $z_{i}$ in $M_{i}$ are computed (lines 7--12); in particular, if 
$\delta_i (x_i,\sigma)$ is defined (note that maps $\delta_i$ are in general partial) then $z_{i}^{+}$ is set in line 9 
to $\delta_i (x_i,\sigma)$; otherwise it is set in line 11 to $z_i$. If in line 14,
according to the definition of the transition map of $\mathrm{Obs}^{d}(
\mathcal{N})$, there is a transition in $\mathrm{Obs}^{d}(\mathcal{N})$ from
aggregate $(z_{1},z_{2},...,z_{N})$ to aggregate $%
(z_{1}^{+},z_{2}^{+},...,z_{N}^{+})$ with input label $\sigma $, then $
(z_{1}^{+},z_{2}^{+},...,z_{N}^{+})$ is further processed. Note that we are
not storing information computed in line 14 on the transition map of $
\mathrm{Obs}^{d}(\mathcal{N})$ but only states of $\mathrm{Obs}^{d}(\mathcal{
N})$ which cannot be avoided for computing the projections $\pi |_{\mathrm{
Obs}(M_{i})}(\mathrm{Obs}^{d}(\mathcal{N}))$. By this fact, Algorithm \ref
{alg} overcomes drawback (D2) of Algorithm \ref{alg1}. Algorithm \ref{alg}
first checks in line 15 if the aggregate $(z_{1}^{+},z_{2}^{+},...,z_{N}^{+})
$ was not processed before. If so, line 16 is processed. 
By definition of output function $H_{\mathrm{Obs}^{d}}$, condition $
(z_{1}^{+},z_{2}^{+},...,z_{N}^{+})\nsubseteq X_{1,2,...,N}\backslash
C_{1,2,...,N}$ implies $H_{\mathrm{Obs}
^{d}}(z_{1}^{+},z_{2}^{+},...,z_{N}^{+})=1$ which, combined with condition $
(z_{1}^{+},z_{2}^{+},...,z_{N}^{+})\nsubseteq C_{1,2,...,N}$, implies that
condition (iii) of Proposition \ref{thobs} is not satisfied. Hence, by
applying Proposition \ref{thobs}, if $(z_{1}^{+},z_{2}^{+},...,z_{N}^{+})$
satisfies such a condition, the algorithm immediately terminates in line 17,
concluding that $\mathcal{N}$ is not critically observable. By this fact,
Algorithm \ref{alg} overcomes drawback (D3) of Algorithm \ref{alg1}. If $
(z_{1}^{+},z_{2}^{+},...,z_{N}^{+})$ does not satisfy condition in line 16,
it is added to $Z_{\mathrm{Obs}}^{\mathrm{temp}}$ in line 19; the set of
states $X_{\mathrm{Obs},i}$ and the transition map $\delta _{\mathrm{Obs},i}$
of $\pi |_{\mathrm{Obs}(M_{i})}(\mathrm{Obs}^{d}(\mathcal{N}))$ are updated
in lines 22 and 23. Note that since $\delta _{\mathrm{Obs},i}$ is updated
only if condition in line 14 holds, then Algorithm \ref{alg} constructs
step--by--step $\pi |_{\mathrm{Obs}(M_{i})}(\mathrm{Obs}^{d}(\mathcal{N}))$
and not $\mathrm{Obs}(M_{i})$. By this fact, Algorithm \ref{alg} overcomes
drawback (D1) of Algorithm \ref{alg1}. The outputs of states $z_{i}^{+}$ are
set in lines 24--27. 
In line 35, set $X_{\mathrm{Obs}}$ is updated to $X_{\mathrm{Obs}}\cup Z_{
\mathrm{Obs}}^{\mathrm{temp}}$ and set $X_{\mathrm{Obs}}^{\mathrm{temp}}$ to 
$Z_{\mathrm{Obs}}^{\mathrm{temp}}$; the next iteration then starts.
Algorithm \ref{alg} terminates when there are no more states in $X_{\mathrm{
Obs}}^{\mathrm{temp}}$ to be processed (see line 5) or condition in line 16
is satisfied. From the above explanation, it is clear that Algorithm \ref
{alg} terminates in a finite number of states. Moreover it is also clear
that in the worst case, the computational complexity of Algorithm \ref{alg}
is the same as the one in Algorithm \ref{alg1}, i.e. $O(2^{n_{\max }N})$. This is typical for on--the--fly based algorithms. 
However, there are practical cases in which Algorithm \ref{alg} performs
better than Algorithm \ref{alg1}; a pair of examples are included in the end of the next section.

\section{Model reduction via bisimulation}
\label{sec4}

In this section we propose the use of bisimulation equivalence \cite%
{Milner,Park} to reduce the computational complexity in checking critical
observability and designing observers. We start by recalling the notion of bisimulation
equivalence.

\begin{definition}
\label{bisim} Two FSMs $M_{1}=\left(X_{1},X_{1}^0,\Sigma_1,\delta_1,C_1
\right)$ and $M_2=\left(X_2,X_{2}^0,\Sigma_2,\delta_2,C_2\right)$ are
bisimilar, denoted by $M_1\cong M_2$, if $\Sigma_1 = \Sigma_2$ and there exists a relation $R
\subseteq X_1 \times X_2$, called bisimulation relation, such that for any $
(x_1,x_2) \in R$ the following conditions are satisfied:\newline
(i) $x_1 \in X_{1}^0$ if and only if $x_2 \in X_{2}^0$;\newline
(ii) for any $\sigma \in \Sigma_1$ such that $\delta_1(x_1,\sigma)\neq
\varnothing$ and for any $x_1^+ \in \delta_1(x_1,\sigma)$ there exists $
x_2^+ \in \delta_2(x_2,\sigma)$ such that $(x_1^+,x_2^+) \in R$;\newline
(iii) for any $\sigma \in \Sigma_2$ such that $\delta_2(x_2,\sigma)\neq
\varnothing$ and for any $x_2^+ \in \delta_2(x_2,\sigma)$ there exists $
x_1^+ \in \delta_1(x_1,\sigma)$ such that $(x_1^+,x_2^+) \in R$;\newline
(iv) $x_1 \in C_1$ if and only if $x_2 \in C_2$. 
\end{definition}

The above notion of bisimulation equivalence differs from the classical one 
\cite{Milner,Park} because of the additional condition (iv). This condition is needed because for two states $x_1$ and $x_2$ to be considered as equivalent, they have to be either both critical or both noncritical states. 
Using this notion of bisimulation equivalence, we get the following result:

\begin{proposition}
\label{propnew} If FSMs $M_{1}$ and $M_{2}$ are bisimilar then: \newline
(i) $M_{1}$ is critically observable if and only if $M_{2}$ is critically
observable;\newline
(ii) An FSM $\mathrm{Obs}$ is a critical observer for $M_{1}$ if and only if it is
a critical observer for $M_{2}$.
\end{proposition}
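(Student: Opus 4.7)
The plan is to first establish a word-level ``lifting'' of the bisimulation relation, and then derive both parts (i) and (ii) as short consequences. Let $R \subseteq X_1 \times X_2$ be a bisimulation witnessing $M_1 \cong M_2$. By induction on $|w|$, I would prove the auxiliary claim: for every $(x_1,x_2)\in R$ and every word $w \in \Sigma^\ast$, every $x_1^+ \in \hat{\delta}_1(x_1,w)$ admits some $x_2^+ \in \hat{\delta}_2(x_2,w)$ with $(x_1^+,x_2^+)\in R$, and symmetrically. The base $w=\varepsilon$ is trivial; the inductive step writes $w=u\sigma$ and uses conditions (ii)--(iii) of Definition \ref{bisim} to extend matched pairs one transition at a time. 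Combined with condition (i) (read, as is standard, as saying that each initial state of one machine is $R$-matched with some initial state of the other), this immediately gives $L(M_1) = L(M_2)$.

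For part (i), I would argue contrapositively. If $M_2$ is not critically observable, then some $x_2^0 \in X_2^0$ and some $w\in L(M_2)$ yield $\hat{\delta}_2(x_2^0,w)$ containing both $\bar x_2 \in C_2$ and $\tilde x_2 \notin C_2$. Pick $x_1^0\in X_1^0$ with $(x_1^0,x_2^0)\in R$; by the auxiliary claim there exist $\bar x_1, \tilde x_1 \in \hat{\delta}_1(x_1^0, w)$ with $(\bar x_1,\bar x_2), (\tilde x_1,\tilde x_2) \in R$. Condition (iv) then forces $\bar x_1 \in C_1$ and $\tilde x_1 \notin C_1$, so $M_1$ is also not critically observable. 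The reverse implication is symmetric.

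For part (ii), I would apply Definition \ref{defobsabstract} directly. Assume $\mathrm{Obs}$ is a critical observer for $M_1$, so $\Sigma_{\mathrm{Obs}}=\Sigma_1=\Sigma_2$. Take any state run of $M_2$ with successive states $x_2^0, x_2^1, \ldots$ along input sequence $\sigma^1,\sigma^2,\ldots$, and let $w^i$ be its length-$i$ prefix. By the auxiliary claim and initial-state matching, there exist $x_1^0 \in X_1^0$ and $x_1^i \in \hat{\delta}_1(x_1^0, w^i)$ with $(x_1^i,x_2^i)\in R$, so $w^i \in L(M_1)$ witnesses a run of $M_1$ visiting $x_1^i$. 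Since $\mathrm{Obs}$ is deterministic and its run is determined solely by the trace, the unique observer run $z^0,z^1,\ldots$ corresponding to the $M_2$-run is identical to the one corresponding to the $M_1$-run. Definition \ref{defobsabstract} applied to $M_1$ gives $H_{\mathrm{Obs}}(z^i)=1 \Leftrightarrow x_1^i\in C_1$, and condition (iv) of $R$ yields $x_1^i\in C_1 \Leftrightarrow x_2^i\in C_2$. Hence $\mathrm{Obs}$ satisfies Definition \ref{defobsabstract} for $M_2$; the converse is symmetric.

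The main obstacle is the auxiliary claim: one needs both halves of the bisimulation to cooperate at every prefix of $w$, because critical observability is a statement about the whole image set $\hat{\delta}(x^0,w)$ rather than about individual matched pairs, so a single $R$-matched successor is not enough. A secondary subtlety is the implicit ``initial-state surjectivity'' built into a proper bisimulation (every initial state on each side is $R$-matched to some initial state on the other); without it one loses even $L(M_1)=L(M_2)$, and the whole argument collapses.
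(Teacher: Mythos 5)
Your proof is correct and follows essentially the same route as the paper's: negate critical observability to obtain two reachable states of opposite criticality under a common trace, transport them through the bisimulation (lifted from single transitions to words) to the other FSM, and conclude via condition (iv) of Definition \ref{bisim}; the paper phrases this as a contradiction where you use the contrapositive, which is immaterial. You are in fact more explicit than the paper on two points it glosses over --- you state and prove the word-level lifting that the paper invokes implicitly (``Since $M_1\cong M_2$, there exist a pair of state runs\ldots''), you actually carry out part (ii), which the paper dismisses with ``similar reasoning'', and the initial-state-matching subtlety you flag is a genuine gap in Definition \ref{bisim} as written (the empty relation vacuously satisfies its conditions), so your reliance on the intended reading matches what the paper itself tacitly assumes.
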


\begin{proof}
Set $M_i=(X_i,X_i^0,\Sigma_i,$ $\delta_i,C_i)$, $i=1,2$ and let $R$ be a bisimulation relation between $M_1$ and $M_2$. \\
Proof of (i). By contradiction assume
that $M_1$ is critically observable and $M_2$ is not critically observable.
Hence, there exist a pair of state runs of $M_2$ with initial states $x_2^0,
\overline{x}_2^0\in X_{2}^{0}$, common trace $w \in L(M_2)$, and states $
x_2\in \hat{\delta}_{2}(x_2^0,w)$, $\overline{x}_2 \in \hat{\delta}_{2}(
\overline{x}_2^0,w)$ such that 
\begin{equation}  \label{qaa}
x_2 \in C_2 \wedge \overline{x}_2 \in X_{2} \backslash C_2.
\end{equation}
Since $M_1 \cong M_2$, there exist a pair of state runs of $M_1$ with initial states $x_1^0,
\overline{x}_1^0\in X_{1}^{0}$, common trace $w \in L(M_1) $ and states $x_1
\in \hat{\delta}_{1}(x_1^0,w)$, $\overline{x}_1 \in \hat{\delta}_{1}(
\overline{x}_1^0,w)$ such that $(x_1,x_2),(\overline{x}_1,\overline{x}_2)\in
R$ which, by (\ref{qaa}) and condition (iv) in Definition \ref{bisim},
implies $x_1\in C_1$ and $\overline{x}_1\in X_{1} \backslash C_1$. Thus, $M_1
$ is not critically observable and a contradiction holds. \\
Proof of (ii). 
By contradiction assume that
$\mathrm{Obs}$ is a critical observer for $M_{1}$ but not for $M_2$. By Definition \ref{defobsabstract}, there exist a state run 
$r_2: x^0_2 \rTo^{\sigma^1} x^1_2$ $\rTo^{\sigma^2}
x^2_2 \rTo^{\sigma^3} x^3_2 \, ... $ of $M_2$ and the corresponding (unique) state
run $r_\mathrm{Obs}: z^0 \rTo^{\sigma_1} z^1 \rTo^{\sigma_2} z^2 \rTo
^{\sigma_3} z^3 \, ... $ of $\mathrm{Obs}$ such that
\begin{equation}  \label{qaaq}
[H_\mathrm{Obs}
(z^{\overline{i}})=1\wedge x^{\overline{i}}_2\notin C_2] \vee [H_\mathrm{Obs}(z^{\overline{i}})=0 \wedge x^{\overline{i}}_2\in C_2 ],
\end{equation}
for some $\overline{i}$. 
Suppose first $H_\mathrm{Obs} (z^{\overline{i}})=1\wedge x^{\overline{i}}_2\notin C_2$. By Definition \ref{bisim}, there exists a state run $r_1: x^0_1 \rTo^{\sigma^1} x^1_1$ $\rTo^{\sigma^2}
x^2_1 \rTo^{\sigma^3} x^3_1 \, ... $ of $M_1$ such that $(x_1^i,x_2^i)\in R$ for all $i$. In particular for $i=\overline{i}$ we get by condition (iv) of Definition \ref{bisim} that $x^{\overline{i}}_1\notin C_1$ from which, $\mathrm{Obs}$ is not a critical observer for $M_1$ and a contradiction holds. The second case in (\ref{qaaq}) can be proven by using the same arguments.
\end{proof}

Space and time computational complexities in checking bisimulation equivalence between $M_{1}$ and $M_{2}$
with $|X_{1}|=n_{1}$ and $|X_{2}|=n_{2}$ states are $O(n_{1}^{2}+n_{2}^{2})$
and $O((n_{1}^{2}+n_{2}^{2})\log (n_{1}+n_{2}))$, respectively, see e.g. 
\cite{BisAlg,Piazza}. Bisimulation equivalence is an equivalence relation on
the class of FSMs. We now define the network of FSMs $\mathcal{N}^{\min }$
as the quotient of the original network $\mathcal{N}$ induced by the
bisimulation equivalence. More precisely given $\mathcal{N}$, define the
following equivalence classes induced by the bisimulation equivalence:

$\mathcal{E}_1=\{M_{\mathbf{i}(1,1)}, M_{\mathbf{i}(1,2)},..., M_{\mathbf{i}
(1,n^1)}\},$\newline
$\mathcal{E}_2=\{M_{\mathbf{i}(2,1)}, M_{\mathbf{i}(2,2)},..., M_{\mathbf{i}
(2,n^2)}\},$\newline
$...,$ \newline
$\mathcal{E}_{N^{\min}}=\{M_{\mathbf{i}(N^{\min},1)}, M_{\mathbf{i}
(N^{\min},2)},..., M_{\mathbf{i}(N^{\min},n^{N^{\min}})}\}$, 

such that $\mathcal{N}=\{M_{\mathbf{i}(k,j)}\}_{j\in [1;n^k],k\in [1;N^{\min}]}$ and $M_{\mathbf{i}(k,j_{1})},$ $M_{\mathbf{i}(k,j_{2})}\in 
\mathcal{E}_{k}$ if and only if $M_{\mathbf{i}(k,j_{1})}\cong M_{\mathbf{i}
(k,j_{2})}$. 
Denote by $M_{\mathbf{i}_{k}^{\min }}\in \mathcal{E}_{k}$ a representative
of the equivalence class $\mathcal{E}_{k}$ and define the network of FSMs 
\begin{equation*}
\mathcal{N}^{\min }=\{M_{\mathbf{i}_{1}^{\min }},M_{\mathbf{i}_{2}^{\min
}},...,M_{\mathbf{i}_{N^{\min }}^{\min }}\}
\end{equation*}
with $\mathbf{M}(\mathcal{N}^{\min })=M_{\mathbf{i}_{1}^{\min }}||M_{\mathbf{
i}_{2}^{\min }}||...||M_{\mathbf{i}_{N^{\min }}^{\min }}$. 
The following result holds.
\begin{theorem}
\label{bisimth}
If $C_i = \varnothing$ for all $i\in [1;N]$ or $C_i = X_i$ for all $i\in [1;N]$ then $\mathbf{M}(\mathcal{N}) \cong \mathbf{M}(\mathcal{N}^{\min })$.
\end{theorem}

\begin{proof}
From Proposition 3.1.5 in \cite{ThesisPezzuti},
\begin{equation}
\label{eq1}
[ [M_1 \cong M_2 ] \wedge [ M_3 \cong M_4 ]] \Rightarrow [M_1||M_3 \cong M_2||M_4 ].
\end{equation}
If we show that
\begin{equation}
\label{eq2}
[M_1 \cong M_2 ] \Rightarrow [ M_1||M_2 \cong M_1 ],
\end{equation}
then the result holds as a straightforward application of (\ref{eq1}) and (\ref{eq2}) and by the definition of $\mathbf{M}(\mathcal{N})$ and $\mathbf{M}(\mathcal{N}^{\min })$. Let $M_i =(X_i, X_i^0, \Sigma_i, \delta_i, C_i)$ for $i=1,2$, and $M_1||M_2 =(X,X^0,\Sigma, \delta, C)$. Define $R\subseteq X \times X_2$ such that $((x_1,x_2),x'_1)\in R$ if and only if $x_1=x_1'$. Consider any $((x_1,x_2),x'_1)\in R$. Condition (i) of Definition 
\ref{bisim} holds by definition of $R$ and of initial states in Definition \ref{composition}. As far as for condition (ii), first note that since $M_1 \cong M_2$ then $\Sigma=\Sigma_1=\Sigma_2$ and the parallel composition $M_1 \Vert M_2$ boils down to the product composition $M_1 \times M_2$ (see e.g. \cite{Cassandras}), i.e. only synchronized transitions are allowed; consider now any $\sigma \in \Sigma$ such that $\delta((x_1,x_2),\sigma)$ is defined and any $(x^+_1,x_2^+) \in \delta((x_1,x_2),\sigma)$. 
By Definition \ref{composition} and since $\Sigma_1 \cong \Sigma_2$ then $x_1^+\in \delta_1(x_1,\sigma)$ from which, by setting $x^{\prime,+}_1=x^+_1$ we get $((x^+_1,x_2^+),x^{\prime,+}_1)\in R$. We now show condition (iii) of Definition \ref{bisim}. Consider any $\sigma \in \Sigma_1$ such that $\delta_1(x_1,\sigma)$ is defined and any $x^{\prime,+}_1 \in \delta_1(x_1,\sigma)$. Select $x_1^+= x_1^{\prime,+}$. Since $\Sigma_1 \cong \Sigma_2$ then there exists $x_2^+\in \delta_{2}(x_2,\sigma)$ such that the pair $(x_1^+,x_2^+)$ is in a bisimulation relation between $\Sigma_1$ and $\Sigma_2$; this in turn implies, by Definition \ref{composition}, that $(x_1^+,x_2^+)$ is a state in $\delta((x_1,x_2),\sigma)$; since by construction $((x^+_1,x_2^+),x^{\prime,+}_1)\in R$ then condition (iii) holds. Condition (iv) is trivially satisfied by the assumptions placed on the sets of critical states $C_i$.
\end{proof}

The above result extends bisimulation theory based model reduction, traditionally given in the community of formal methods for \textit{single FSMs}, see e.g. \cite{ModelChecking} and the references therein, to \textit{networks of FSMs}.  
It establishes connections between the networks of FSMs $\mathbf{M}(\mathcal{N})$ and $\mathbf{M}(\mathcal{N}^{\min })$ under the assumption that each set $C_i$ is either empty or coincides with the set of states $X_i$. This assumption corresponds in fact to drop condition (iv) from Definition \ref{bisim}, thus obtaining the standard definition of bisimulation as in e.g. \cite{Milner,Park}. If this assumption is removed, the following example shows 
that the statement of Theorem \ref{bisimth} is not true in general. 

\begin{example}
\label{exampleContra}
Consider the FSMs $M_1$ depicted in Fig. \ref{fig1contra} and $M_2 = M_1$. Of course, $M_1 \cong M_2$. FSM $M_1 || M_2$ is depicted in Fig. \ref{fig2contra}. While in $M_1 || M_2$ there are two transitions from critical states $(2,3)$ and $(3,2)$ to critical states $(4,5)$ and $(5,4)$, respectively, there is no transition in $M_1$ from a critical state to a critical state. Hence, this is enough to conclude that FSMs $M_1 || M_2$ and $M_1$ are not bisimilar.
\end{example}

\begin{figure}[tbp]
\begin{center}
\includegraphics[scale=0.25]{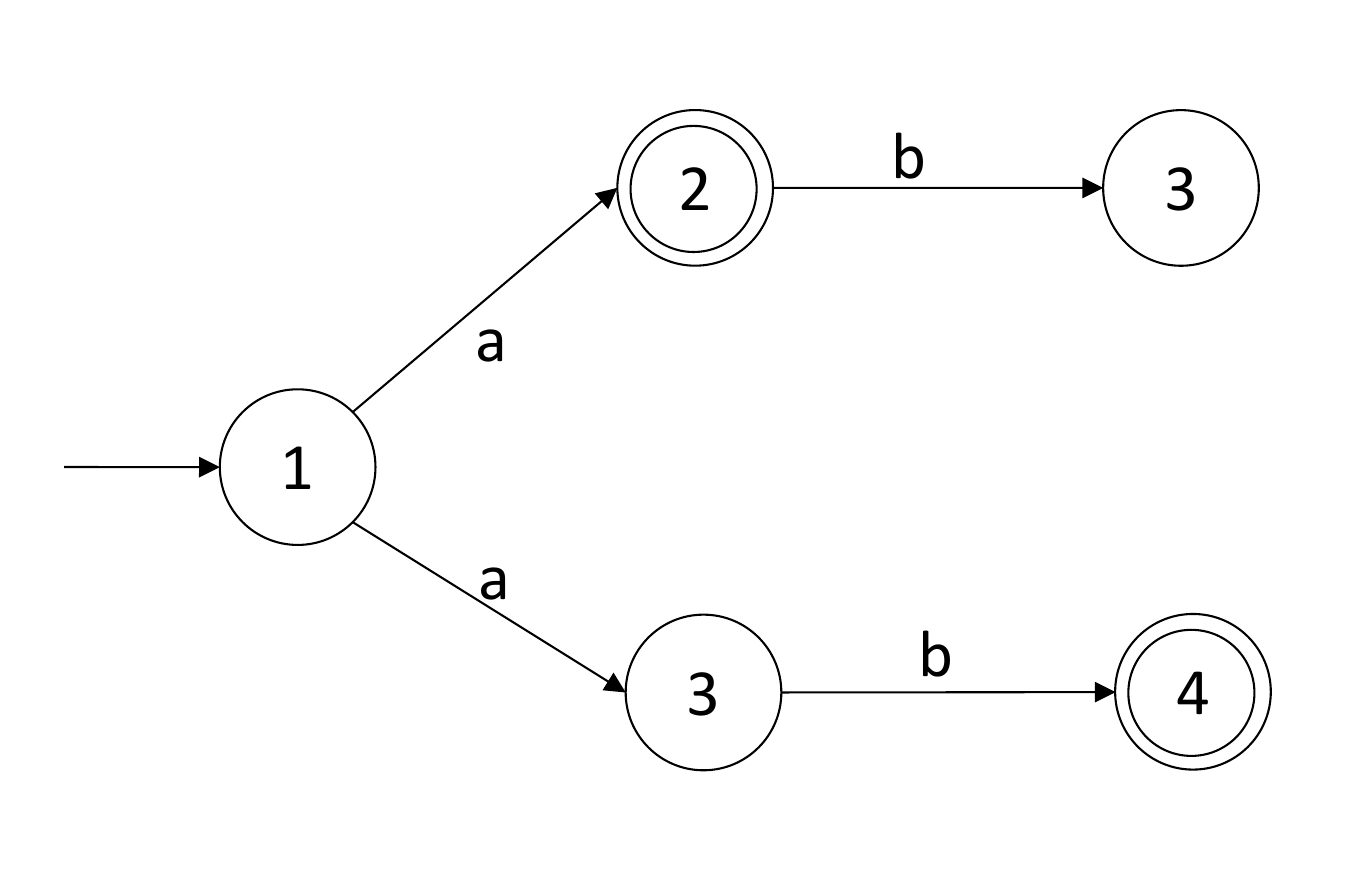}
\end{center}
\caption{FSM $M_1$ in Example \ref{exampleContra}.}
\label{fig1contra}
\end{figure}

\begin{figure}[tbp]
\begin{center}
\includegraphics[scale=0.25]{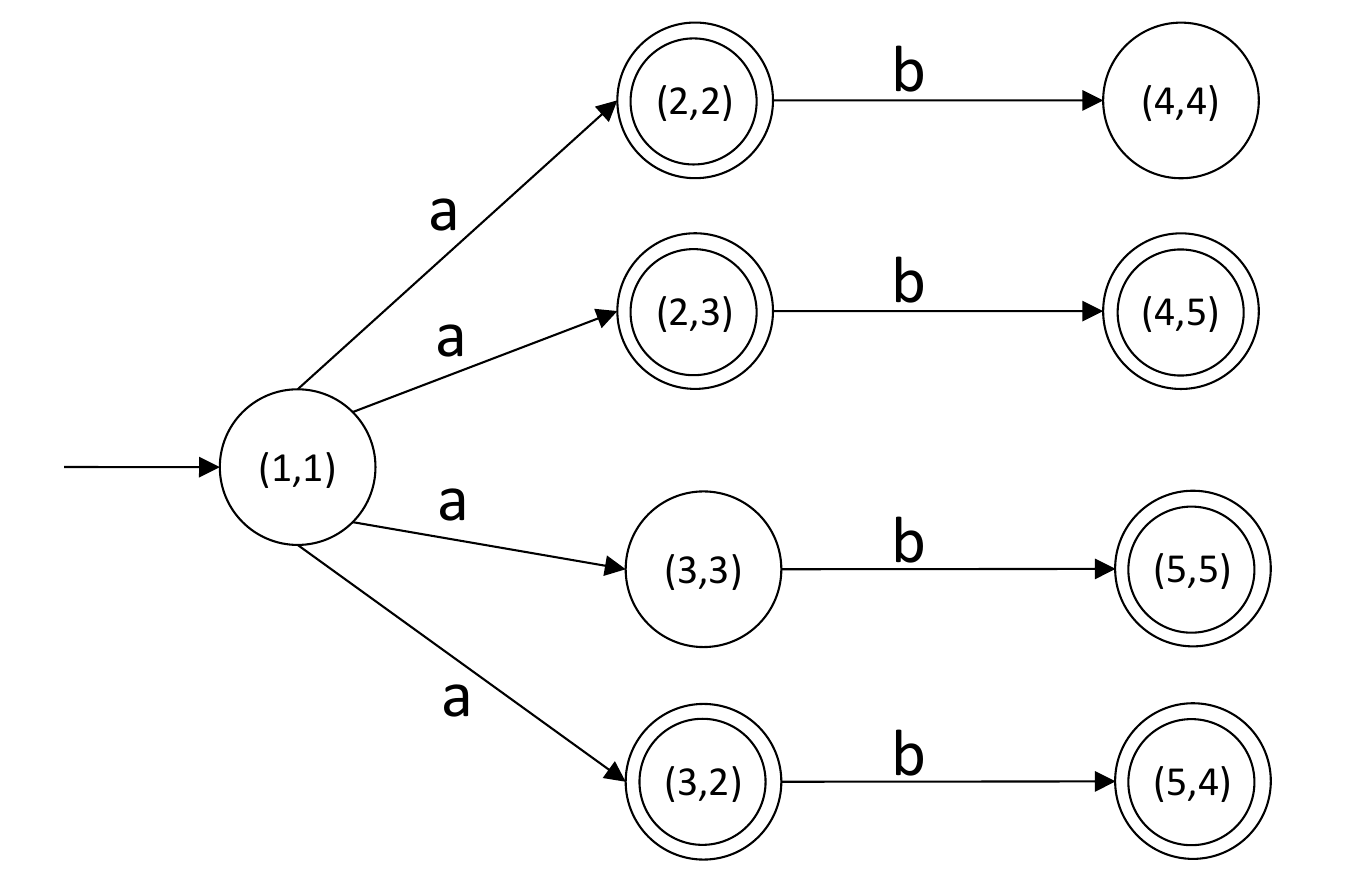}
\end{center}
\caption{FSM $M_1 \Vert M_2$ in Example \ref{exampleContra}.}
\label{fig2contra}
\end{figure}

However, we now show that the proposed reduced network $\mathbf{M}(\mathcal{N}^{\min })$ preserves the critical observability properties of the original network $\mathbf{M}(\mathcal{N})$. 
The forthcoming results rely upon the following technical results.

\begin{lemma}
\label{lemmaObs} Consider FSMs $M_i=(X_i,X^{0}_i,\Sigma_i,\delta_i,C_i)$
with $i\in [1;3]$ and suppose that $M_2$ and $M_3$ are bisimilar. Then an FSM $\mathrm{Obs}$ is a critical observer for $M_1 || M_2$ if and only if it is a
critical observer for $M_1 || M_2 ||M_3$.
\end{lemma}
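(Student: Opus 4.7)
The plan is to reduce the lemma to Proposition \ref{propnew}(ii) by establishing the bisimilarity $M_1 \| M_2 \cong M_1 \| M_2 \| M_3$. Once this is in hand, the biconditional on critical observers follows immediately: an FSM is a critical observer of $M_1 \| M_2$ iff it is a critical observer of $M_1 \| M_2 \| M_3$.

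Let $R_{23} \subseteq X_2 \times X_3$ be a bisimulation witnessing $M_2 \cong M_3$. I would propose the candidate relation
\[
R = \{\bigl((x_1,x_2),\,(x_1,x_2,x_3)\bigr) : x_1 \in X_1,\ (x_2,x_3) \in R_{23}\} \subseteq X_{1,2} \times X_{1,2,3}
\]
and verify that it meets the four conditions of Definition \ref{bisim}. The initial-state condition (i) follows from the product structure of $X_{1,2}^0$ and $X_{1,2,3}^0$ combined with condition (i) of $R_{23}$: any $(x_1,x_2) \in X_{1,2}^0$ is matched by picking $x_3 \in X_3^0$ with $(x_2,x_3) \in R_{23}$, and vice versa. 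The critical-state condition (iv) is immediate from Definition \ref{composition}: $(x_1,x_2) \in C_{1,2}$ iff $x_1 \in C_1$ or $x_2 \in C_2$, and since $(x_2,x_3) \in R_{23}$ gives $x_2 \in C_2 \Leftrightarrow x_3 \in C_3$ by condition (iv) of $R_{23}$, this is equivalent to $(x_1,x_2,x_3) \in C_{1,2,3}$.

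The bulk of the work is the transition-matching conditions (ii) and (iii). Given $((x_1,x_2),(x_1,x_2,x_3)) \in R$ and a successor $(y_1,y_2) \in \delta_{1,2}((x_1,x_2),\sigma)$, I would proceed by case analysis following the three cases of Definition \ref{composition}. For instance, when $\sigma \in \Sigma_1 \cap \Sigma_2$, condition (ii) of $R_{23}$ yields some $y_3 \in \delta_3(x_3,\sigma)$ with $(y_2,y_3) \in R_{23}$, so the triple $(y_1,y_2,y_3) \in \delta_{1,2,3}((x_1,x_2,x_3),\sigma)$ gives the required lift, related by $R$. The private-label cases are analogous: if $\sigma \in \Sigma_1 \setminus \Sigma_2$, then $x_3$ simply stays fixed in the lifted transition; if $\sigma \in \Sigma_2 \setminus \Sigma_1$, condition (ii) of $R_{23}$ again supplies the matching $M_3$-successor. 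The converse direction is symmetric, invoking condition (iii) of $R_{23}$ to project $M_3$-moves back to $M_2$-moves while keeping $x_1$-components coordinated.

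The main obstacle is the case analysis on the interaction between $\sigma$ and the alphabets $\Sigma_1,\Sigma_2,\Sigma_3$: the parallel-composition rule distinguishes shared from private labels, and one must also be careful that $M_3$-moves on labels outside $\Sigma_1 \cup \Sigma_2$ are excluded by the bisimilarity $M_2 \cong M_3$ so that every $M_1\|M_2\|M_3$-transition is indeed simulated by $M_1\|M_2$. Once this bookkeeping is completed, $M_1\|M_2 \cong M_1\|M_2\|M_3$ is established, and Proposition \ref{propnew}(ii) yields the lemma.
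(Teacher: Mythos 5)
Your plan hinges on the intermediate claim $M_1\|M_2\cong M_1\|M_2\|M_3$, and that claim is false in general for the non-deterministic FSMs of this paper, so the reduction to Proposition \ref{propnew}(ii) does not go through. Concretely, your candidate relation $R$ fails condition (iii) of Definition \ref{bisim}: on a label $\sigma$ shared by $\Sigma_2$ and $\Sigma_3$, a transition of $M_1\|M_2\|M_3$ out of $(x_1,x_2,x_3)$ may land on $(y_1,y_2,y_3)$ with $y_2\in\delta_2(x_2,\sigma)$ and $y_3\in\delta_3(x_3,\sigma)$ chosen \emph{independently}; nothing forces $(y_2,y_3)\in R_{23}$, and such a mismatched triple is $R$-related to no successor of $(x_1,x_2)$. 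This is not repairable by picking a cleverer relation. Take $M_1$ trivial (single non-critical state, empty alphabet) and $M_2=M_3$ with states $p,q,q',r$, initial state $p$, $\delta(p,\sigma)=\{q,q'\}$, $\delta(q,a)=\{r\}$, $\delta(q',b)=\{r\}$, $C=\varnothing$. In $M_2\|M_3$ the state $(q,q')$ is reachable from $(p,p)$ via $\sigma$ and is a deadlock (neither $a$ nor $b$ is enabled in both components), while every $\sigma$-successor of $p$ in $M_2$ has an enabled transition; hence no bisimulation can relate $p$ to $(p,p)$, and $M_1\|M_2\not\cong M_1\|M_2\|M_3$ even though $M_2\cong M_3$. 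The obstruction is precisely non-determinism; your argument would be fine for deterministic FSMs.

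The lemma is nonetheless true, because the critical-observer property only constrains, trace by trace and run by run, the criticality of the states reached --- a linear-time property --- whereas bisimilarity of the composites is a strictly stronger branching-time requirement. The paper's proof therefore works directly with state runs rather than with an equivalence of the composed machines: assuming $\mathrm{Obs}$ fails for one composite, it exhibits a violating run of the other with the same trace, either by deleting the $M_3$ component of a run of $M_1\|M_2\|M_3$, or by adjoining an $M_3$ component to a run of $M_1\|M_2$ chosen stepwise through $R_{23}$; it also uses an auxiliary run $\hat r_{1,2}$ in which the $M_2$ component is replaced by an $R_{23}$-partner of the $M_3$ component, to handle the case where the violation is caused by $x_3^n\in C_3$. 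To fix your proof, abandon the bisimilarity of the composites and carry out this run-level transfer argument (or, equivalently, prove the trace-and-criticality correspondence between the two composites directly), using $R_{23}$ only componentwise as the paper does.
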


\begin{proof}
Let be $\mathrm{Obs}=(X_\mathrm{Obs}
,X^{0}_\mathrm{Obs},\Sigma_\mathrm{Obs},$ $\delta_\mathrm{Obs},Y_\mathrm{Obs}
,H_\mathrm{Obs})$ and $R_{23}$ be a bisimulation relation between $M_2$ and $
M_3$. (Sufficiency.) By contradiction assume that $\mathrm{Obs}$ is not a
critical observer for $M_1 || M_2 ||M_3$. By Definition \ref{defobsabstract}
, there exist a state run $r:(x^0_1,x^0_2,x^0_3) \rTo^{\sigma^1}
(x^1_1,x^1_2,x^1_3) \rTo^{\sigma^2} \,$ $... \, \rTo^{\sigma^n}
(x^n_1,x^n_2,x^n_3)
$ of $M_1 || M_2 ||M_3$ and the corresponding state run $r_{\mathrm{Obs}
}:\quad z^0 \rTo^{\sigma^1} z^1 \rTo^{\sigma^2} \, ... \, \rTo^{\sigma^n} z^n
$ of $\mathrm{Obs}$ such that (case 1) $[(x^n_1,x^n_2,x^n_3)\notin
C_{1,2,3}\wedge H_\mathrm{Obs}(z^n)=1]$  or (case 2) $[(x^n_1,x^n_2,x^n_3)
\in C_{1,2,3}\wedge H_\mathrm{Obs}(z^n)=0]$. Construct the sequence 
$r_{1,2}: (x^0_1,x^0_2) \rTo^{\sigma^1}$ $(x^1_1,x^1_2) \rTo^{\sigma^2} \,
... \, \rTo^{\sigma^n} (x^n_1,$ $x^n_2)$, 
where $r_{1,2}$ has been obtained by removing the third component in the
states of run $r$. It is readily seen that $r_{1,2}$ is a state run of $M_1
|| M_2$. Construct the sequence $\hat{r}_{1,2}:\quad (x^0_1,\hat{x}^0_2) \rTo
^{\sigma^1} (x^1_1,\hat{x}^1_2) \rTo^{\sigma^2} \, ... \, \rTo^{\sigma^n}
(x^n_1,\hat{x}^n_2) $ 
such that $(\hat{x}^i_2,x^i_3)\in R_{23}$ for any $i\in [0;n]$. By
construction, $\hat{r}_{1,2}$ is a state run of $M_1||M_2$. We start by
considering case 1. Since $(x^n_1,x^n_2,x^n_3)\notin C_{1,2,3}$ then $
x^n_1\notin C_1$ and $x^n_2\notin C_2$ from which, the last state $
(x^n_1,x^n_2)$ of run $r_{1,2}$ is such that $(x^n_1,x^n_2)\notin C_{1,2}$.
Since $H_\mathrm{Obs}(z^n)=1$, FSM $\mathrm{Obs}$ is not a critical observer
for $M_1 || M_2$ and a contradiction holds. We now consider case 2. Since $
(x^n_1,x^n_2,x^n_3)\in C_{1,2,3}$ then (case 2.1) $[x^n_1\in C_1 \vee
x^n_2\in C_2]$ or (case 2.2) $x^n_3\in C_3$. We start by considering case
2.1. Since $[x^n_1\in C_1 \vee x^n_2\in C_2]$ or equivalently, the last
state $(x^n_1,x^n_2)$ of run $r_{1,2}$ is such that $(x^n_1,x^n_2)\in C_{1,2}
$ and by assumption $H_\mathrm{Obs}(z^n)=0$, a contradiction holds. We
conclude with case 2.2. Since $x^n_3\in C_3$, by definition of run $\hat{r}
_{1,2}$, state $\hat{x}^n_2\in C_2$ which implies that the last state $
(x^n_1,\hat{x}^n_2)$ of run $\hat{r}_{1,2}$ is such that $(x^n_1,\hat{x}
^n_2)\in C_{1,2}$. This last condition combined with the assumed condition $
H_\mathrm{Obs}(z^n)=0$, leads to a contradiction, as well. (Necessity.) By
contradiction assume that $\mathrm{Obs}$ is not a critical observer for $M_1
|| M_2$. By Definition \ref{defobsabstract}, there exist a state run $
r:\quad (x^0_1,x^0_2) \rTo^{\sigma^1} (x^1_1,x^1_2) \rTo^{\sigma^2} \, ...
\, \rTo^{\sigma^n} (x^n_1,x^n_2) $ of $M_1 || M_2$ and the corresponding
state run $r_{\mathrm{Obs}}:\quad z^0 \rTo^{\sigma^1} z^1 \rTo^{\sigma^2} \,
... \, \rTo^{\sigma^n} z^n $ of $\mathrm{Obs}$ such that (case 1) $
[(x^n_1,x^n_2)\notin C_{1,2}\wedge H_\mathrm{Obs}(z^n)=1]$ or (case 2) $
[(x^n_1,x^n_2)\in C_{1,2}\wedge H_\mathrm{Obs}(z^n)=0]$. Construct the
sequence $r^{\prime}: (x^0_1,x^0_2,x^0_3) \rTo^{\sigma^1} (x^1_1,x^1_2,x^1_3) 
\rTo^{\sigma^2} \, ... \, \rTo^{\sigma^n} (x^n_1,x^n_2,x^n_3) $, 
where $(x^i_2,x^i_3)\in R_{23}$ for any $i\in [0;n]$. By construction $
r^{\prime }$ is a state run of $M_1 || M_2 || M_3$. We start by considering
case 1. By condition (iv) of Definition \ref{bisim}, we get $
[\,(x^n_1,x^n_2)\notin C_{1,2}\,]$ iff $[\,x^n_1\notin C_1 \wedge
x^n_2\notin C_2\,]$ iff $[\,x^n_1\notin C_1 \wedge x^n_2\notin C_2 \wedge
x^n_3\notin C_3\,]$ iff $[\,(x^n_1,x^n_2,x^n_3)\notin C_{1,2,3}\,]$. 
Since $H_\mathrm{Obs}(z^n)=1$, FSM $\mathrm{Obs}$ is not a critical observer
for $M_1 || M_2 || M_3$ and a contradiction holds. We now consider case 2.
By condition (iv) of Definition \ref{bisim}, we get $[\,(x^n_1,x^n_2)\in
C_{1,2}\,]$ iff $[\,x^n_1\in C_{1} \vee x^n_2\in C_{2}\,]$ iff $[\,x^n_1\in
C_{1} \vee x^n_2\in C_{2} \vee x^n_3\in C_{3}\,]$ iff $
[\,(x^n_1,x^n_2,x^n_3)\in C_{1,2,3}\,]$. 
Since $H_\mathrm{Obs}(z^n)=0$, FSM $\mathrm{Obs}$ is not a critical observer
for $M_1 || M_2 || M_3$ and a contradiction holds. 
\end{proof}

\begin{corollary}
\label{lemma4_3} Consider FSMs $M_i=(X_i,X^{0}_i,\Sigma_i,\delta_i,C_i)$
with $i\in [1;3]$ and suppose that $M_2$ and $M_3$ are bisimilar. Then $M_1
|| M_2$ is critically observable if and only if $M_1 || M_2 ||M_3$ is
critically observable.
\end{corollary}

\begin{proof}
The result follows by combining Proposition \ref{thobs} and Lemma \ref{lemmaObs}.
\end{proof}

We now have all the ingredients to present the main results of this section.

\begin{theorem}
\label{maintheorem} $\mathbf{M}(\mathcal{N})$ is critically observable
if and only if $\mathbf{M}(\mathcal{N}^{\min})$ is critically observable.
\end{theorem}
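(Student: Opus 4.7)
The plan is to obtain Theorem \ref{maintheorem} as a consequence of Lemma \ref{lemma4_3}, applied iteratively to strip away from $\mathbf{M}(\mathcal{N})$ the redundant bisimilar copies of FSMs, one at a time, until only the representatives survive, i.e., until we arrive at $\mathbf{M}(\mathcal{N}^{\min})$. Throughout, I will implicitly use Proposition \ref{Com+Ass-prop} (commutativity and associativity of $\parallel$) together with the fact that critical observability is invariant under the resulting isomorphic rearrangements, since the property depends only on accessible runs and their trace/critical-state partition.

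First, partition the indexing of $\mathcal{N}$ along its bisimulation equivalence classes $\mathcal{E}_1,\ldots,\mathcal{E}_{N^{\min}}$, and, without loss of generality, choose the representative of $\mathcal{E}_k$ to be $M_{\mathbf{i}(k,1)} = M_{\mathbf{i}_k^{\min}}$. By Proposition \ref{Com+Ass-prop} we may regroup
\begin{equation*}
\mathbf{M}(\mathcal{N}) \;=_{\mathrm{iso}}\; \Big(\!\parallel_{k=1}^{N^{\min}} M_{\mathbf{i}_k^{\min}}\Big) \;\Big\Vert\; \Big(\!\parallel_{k=1}^{N^{\min}} \parallel_{j=2}^{n^k} M_{\mathbf{i}(k,j)}\Big),
\end{equation*}
so that all the "duplicate" FSMs are gathered on the right. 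This reduces the theorem to showing that each duplicate $M_{\mathbf{i}(k,j)}$ (with $j\ge 2$) can be dropped without affecting critical observability.

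Second, I would proceed by a finite induction on the total number of duplicates $D = \sum_{k=1}^{N^{\min}} (n^k - 1)$. The base case $D=0$ is trivial since $\mathcal{N}=\mathcal{N}^{\min}$. For the inductive step, pick any duplicate $M_{\mathbf{i}(k,j)}$ with $j\ge 2$. By commutativity/associativity, rewrite the current composition, up to isomorphism, as $M_1 \parallel M_2 \parallel M_3$, where $M_3 = M_{\mathbf{i}(k,j)}$, $M_2 = M_{\mathbf{i}_k^{\min}}$, and $M_1$ is the parallel composition of all remaining FSMs. Since $M_2 \cong M_3$ by construction of the equivalence class $\mathcal{E}_k$, Lemma \ref{lemma4_3} applies and yields that $M_1 \parallel M_2 \parallel M_3$ is critically observable if and only if $M_1 \parallel M_2$ is. Repeating this argument $D$ times removes every duplicate and leaves precisely $\mathbf{M}(\mathcal{N}^{\min})$, up to isomorphism, giving the claimed equivalence.

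The main step to be careful about is the legitimacy of the rearrangement in the inductive step: one must invoke Proposition \ref{Com+Ass-prop} to bring an arbitrary duplicate $M_{\mathbf{i}(k,j)}$ and its representative $M_{\mathbf{i}_k^{\min}}$ into the last two positions of the composition, and one must be sure that critical observability is preserved under the resulting isomorphism (which follows directly from Definition \ref{c_obs}, since isomorphisms preserve initial states, traces and critical labels). Once this bookkeeping is in place, Lemma \ref{lemma4_3} does all the real work and the induction closes cleanly.
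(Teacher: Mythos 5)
Your proposal is correct and follows essentially the same route as the paper: both arguments reduce the theorem to repeated applications of Lemma \ref{lemma4_3}, using Proposition \ref{Com+Ass-prop} to bring each redundant FSM adjacent to its bisimilar representative (the paper phrases this as adding duplicates to $\mathbf{M}(\mathcal{N}^{\min})$ one at a time, while you strip them from $\mathbf{M}(\mathcal{N})$ by induction on the number of duplicates, which is the same argument read in the other direction). Your explicit remarks on the induction bookkeeping and on the invariance of critical observability under the isomorphic rearrangements are sound and, if anything, slightly more careful than the paper's own write-up.
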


\begin{proof}
By applying Corollary \ref{lemma4_3}, for any $M_{\mathbf{i}(N^{\min},j)}\in 
\mathcal{N}\backslash \mathcal{N}^{\min}$, FSM $\mathbf{M}(\mathcal{N}
^{\min})=(M_{\mathbf{i}^{\min}_1}||M_{\mathbf{i}^{\min}_2}||$ $...||M_{
\mathbf{i}^{\min}_{N^{\min}-1}})$ $||M_{\mathbf{i}^{\min}_{N^{\min}}} $ is
critically observable if and only if FSM $(M_{\mathbf{i}^{\min}_1}||M_{
\mathbf{i}^{\min}_2}||...||M_{\mathbf{i}^{\min}_{N^{\min}-1}})||M_{\mathbf{i}
^{\min}_{N^{\min}}}||M_{\mathbf{i}(N^{\min},j)} $ is critically observable
(recall that $M_{\mathbf{i}(N^{\min},j)}\cong M_{\mathbf{i}
^{\min}_{N^{\min}}}$ for any $j\in [1;n^{N^{\min}}]$). Hence, by applying recursively Corollary \ref{lemma4_3}
to all other FSMs $M_{\mathbf{i}(k,j)}\in \mathcal{N}\backslash \mathcal{N}
^{\min}$ and by making use of Proposition \ref{Com+Ass-prop} to properly
rearrange terms in the composed FSM, the result follows.
\end{proof}

\begin{theorem}
\label{thmain2} $\mathrm{Obs}(\mathbf{M}(\mathcal{N}^{\min }))$ is a
critical observer for $\mathbf{M}(\mathcal{N}^{\min })$ if and only if it is
a critical observer for $\mathbf{M}(\mathcal{N})$.
\end{theorem}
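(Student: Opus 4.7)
The plan is to mirror the proof of Theorem \ref{maintheorem}, replacing Lemma \ref{lemma4_3} with Lemma \ref{lemmaObs} throughout. Thus I would proceed by peeling off, one at a time, each FSM $M_{\mathbf{i}(k,j)} \in \mathcal{N} \setminus \mathcal{N}^{\min}$, exploiting the fact that $M_{\mathbf{i}(k,j)} \cong M_{\mathbf{i}^{\min}_k}$ by construction of $\mathcal{N}^{\min}$.

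Concretely, I would first use Proposition \ref{Com+Ass-prop} (commutativity and associativity of parallel composition up to isomorphism) to rewrite the composed FSM in a form suitable for Lemma \ref{lemmaObs}. Given any particular $M_{\mathbf{i}(k,j)}$ slated for addition, I group all previously added FSMs along with every representative $M_{\mathbf{i}^{\min}_{k'}}$ with $k' \neq k$ into a single FSM $M_1$, and isolate the representative $M_2 := M_{\mathbf{i}^{\min}_k}$ from the group so that the composition reads $M_1 \| M_2$. Setting $M_3 := M_{\mathbf{i}(k,j)}$, since $M_2 \cong M_3$, Lemma \ref{lemmaObs} yields that an FSM Obs is a critical observer for $M_1 \| M_2$ if and only if it is a critical observer for $M_1 \| M_2 \| M_3$.

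Iterating this argument over all FSMs in $\mathcal{N} \setminus \mathcal{N}^{\min}$, and using Proposition \ref{Com+Ass-prop} to reshuffle the composition at each step so that the next FSM to be added sits in the "$M_3$ slot," I obtain that \emph{any} FSM Obs is a critical observer for $\mathbf{M}(\mathcal{N}^{\min})$ if and only if it is a critical observer for $\mathbf{M}(\mathcal{N})$. Specializing to $\mathrm{Obs} = \mathrm{Obs}(\mathbf{M}(\mathcal{N}^{\min}))$ gives the stated equivalence.

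The only subtle point is to verify that the property of being a critical observer is invariant under the isomorphisms introduced by Proposition \ref{Com+Ass-prop}; this follows immediately from Definition \ref{defobsabstract}, since isomorphic FSMs have the same state runs (up to relabeling) and the output function commutes with the bijection. The main potential obstacle is the notational bookkeeping in the recursive rearrangement, but since each application of Lemma \ref{lemmaObs} is a biconditional preserved under the isomorphisms of parallel composition, the chain of equivalences closes cleanly and yields the result.
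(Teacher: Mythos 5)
Your proposal is correct and follows essentially the same route as the paper: the paper likewise applies Lemma \ref{lemmaObs} recursively to peel off each $M_{\mathbf{i}(k,j)}\in\mathcal{N}\setminus\mathcal{N}^{\min}$, invoking Proposition \ref{Com+Ass-prop} to rearrange the composition so that the bisimilar pair sits in the right slots. Your explicit remark that being a critical observer is invariant under the isomorphisms of Proposition \ref{Com+Ass-prop} is a point the paper leaves implicit, but it does not change the argument.
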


\begin{proof}
By applying Lemma \ref{lemmaObs}, $\mathrm{Obs}(\mathbf{M}(\mathcal{N}^{\min
}))$ is a critical observer for $\mathbf{M}(\mathcal{N}^{\min })=(M_{\mathbf{
i}_{1}^{\min }}||M_{\mathbf{i}_{2}^{\min }}||...||$ $M_{\mathbf{i}_{N^{\min
}-1}^{\min }})||M_{\mathbf{i}_{N^{\min }}^{\min }}$ if and only if it is a
critical observer for $(M_{\mathbf{i}_{1}^{\min }}||M_{\mathbf{i}_{2}^{\min
}}||...||M_{\mathbf{i}_{N^{\min }-1}^{\min }})||M_{\mathbf{i}_{N^{\min
}}^{\min }}||M_{\mathbf{i}(N^{\min },j)}$ for any FSMs $M_{\mathbf{i}
(N^{\min },j)}\in \mathcal{N}\backslash \mathcal{N}^{\min }$ (recall that $
M_{\mathbf{i}(N^{\min },j)}\cong M_{\mathbf{i}_{N^{\min }}^{\min }}$ for any $j\in [1;n^{N^{\min}}]$).
Hence, by applying recursively Lemma \ref{lemmaObs} to all other FSMs $M\in 
\mathcal{N}\backslash \mathcal{N}^{\min }$ and by making use of Proposition 
\ref{Com+Ass-prop} to properly rearrange terms in the composed FSM, the
result follows.
\end{proof}

The above results reduce the computational complexity effort since they
show that it is possible to consider the reduced network $\mathcal{N}^{\min }
$ to check critical observability and to design critical observers for the original network $\mathcal{N}$. 
We stress that the model reduction via bisimulation equivalence that we
propose here is performed on the collection of FSMs $M_{i}$ and not on the
FSM $\mathbf{M}(\mathcal{N})$, as done for example in \cite
{Wonham:03}; this may allow a drastic computational complexity reduction
when several bisimilar FSMs are present in the network. \\
Results above and in Section \ref{sec3bis} can be
combined together as illustrated in Algorithm \ref{alg3}. 

\begin{algorithm}
\caption{Integrated design of decentralized observers with model reduction}
\label{alg3}
\scriptsize
\begin{algorithmic}[1]
\STATE Compute the network $\mathcal{N}^{\min}$;
\STATE Apply Algorithm \ref{alg} to $\mathcal{N}^{\min}$;
\IF{$\mathbf{M}(\mathcal{N}^{\min})$ is not critically observable}
\STATE BREAK: $\mathbf{M}(\mathcal{N})$ is not critically observable;
\ELSE 
\STATE $\mathbf{M}(\mathcal{N})$ is critically observable;
\ENDIF
\STATE Define projected local observers $\pi|_{\Obs(M_{\mathbf{i}(k,j)})}(\Obs^d(\mathcal{N}))=\pi|_{\Obs(M_{\mathbf{i}_{k}^{\min}})}(\Obs^d(\mathcal{N}^{\min}))$ for any $j\in [1;n_k],k \in [1;N^{\min}]$.
\end{algorithmic}
\end{algorithm}

As a consequence of Proposition \ref{propnew} (ii), the composition of local
observers computed in line 8 of Algorithm \ref{alg3} is a decentralized
critical observer for $\mathcal{N}$. \newline
We now provide a computational complexity analysis. We focus
on computational complexity with respect to parameters $n_{\max}$, $N$ and $
N^{\min}$. A traditional approach to check critical observability of the
network $\mathcal{N}=\{M_1,M_2,...,M_N\}$ consists in computing $\mathrm{Obs}
(\mathbf{M}(\mathcal{N}))$, whose space and time computational complexity by
Corollary \ref{corogio} is $O(2^{n_{\max}N})$, as reported in the second
column of Table \ref{TableComplex}. Computational complexity analysis of
Algorithm \ref{alg3} follows. In line 1, one needs to check bisimulation
equivalence between any pair of FSMs $M_i,M_j$ in $\mathcal{N}$ whose space
computational complexity is $O(n_{\max}^2\,N)$ and time computational
complexity is $O(n_{\max}^2N^2\log(n_{\max}))$. Space and time computational
complexities associated with line 2 are $O(2^{n_{\max}N^{\min}})$ and those
with line 8 are zero. Resulting computational complexity bounds are reported
in the third column of Table \ref{TableComplex}. For example, for $N=10$, $
N^{\min}=7$ and $n_{\max}=10$, space and time computational complexities in
constructing $\mathrm{Obs}(\mathbf{M}(\mathcal{N}))$ are $2^{100}$ and the
ones in constructing $\mathrm{Obs}^{d}(\mathcal{N}^{\min})$ are $2^{70}$. \\
We conclude this section with two illustrative examples. 
In both examples, the goal is to check if a network $\mathcal{N}$ is critically observable and if so, to
design a decentralized critical observer for $\mathcal{N}$. For this purpose
we apply Algorithm \ref{alg3}. In the sequel, space complexity of an FSM is
computed as $S_{1}+S_{2}$ where $S_{1}$ is the sum of the data needed to be
stored for each transition and $S_{2}$ is the number of output data
associated with states. Date stored for a transition from a state $
(z_{1},z_{2},...,z_{m})$ to a state $(z_{1}^{+},z_{2}^{+},...,z_{m^{+}}^{+})$
with a given input label are counted as $\sum_{i\in \lbrack
1;m]}|z_{i}|+\sum_{i\in \lbrack 1;m^{+}]}|z_{i}^{+}|+1$. Time complexity is
computed as the number of transitions generated in composed FSMs and
observers, which represent macro iterations in the algorithms.

\begin{example}
\label{example1tot} Consider $\mathcal{N}=\{M_1,M_2,M_3,M_4\}$ where FSMs $
M_1$ and $M_2$ are depicted in Fig. \ref{fig1} and FSMs $M_3$ and $M_4$ in
Fig. \ref{fig2}, and apply Algorithm \ref{alg3}. (Line 1) It is easy to see
that FSMs $M_1$ and $M_3$ are bisimilar with bisimulation relation $
R_{13}=\{(1,9),(2,10),$ $(3,11),(3,12),(4,13)\}$ and that FSMs $M_2$ and $M_4
$ are bisimilar with bisimulation relation $R_{24}=\{(5,14),(6,15),$ $
(7,16),(7,17),(7,18),(8,16),(8,$ $17),(8,18)\}$. Equivalence classes induced
by the bisimulation equivalence on $\mathcal{N}$ are $\mathcal{E}
_1=\{M_1,M_3\}$ and $\mathcal{E}_2=\{M_2,M_4\}$. The resulting network $
\mathcal{N}^{\min}$ can be chosen as $\{M_1,M_2\}$. (Line 2) By applying
Algorithm \ref{alg} we get that $\mathcal{N}^{\min}$ is critically
observable. The resulting projected local observers $\pi|_{\mathrm{Obs}
(M_{1})}(\mathrm{Obs}^d(\mathcal{N}^{\min}))$ and $\pi|_{\mathrm{Obs}
(M_{2})}(\mathrm{Obs}^d(\mathcal{N}^{\min}))$ are depicted in Fig. \ref{fig8}. 
(Line 8) Define $\pi|_{\mathrm{Obs}(M_{i})}(\mathrm{Obs}^d(\mathcal{N}))$ as 
$\pi|_{\mathrm{Obs}(M_{1})}(\mathrm{Obs}^d(\mathcal{N}^{\min}))$ for $i=1,3$
and as $\pi|_{\mathrm{Obs}(M_{2})}(\mathrm{Obs}^d(\mathcal{N}^{\min}))$ for $
i=2,4$. Space and time complexities in constructing projected local
observers are $54$ and $8$. 
A traditional approach would first construct explicitly $\mathbf{M}(\mathcal{
N})$ and then construct $\mathrm{Obs}(\mathbf{M}(\mathcal{N}))$. The number
of states of $\mathbf{M}(\mathcal{N})$ is $21$ and the one of $\mathrm{Obs}(
\mathbf{M}(\mathcal{N}))$ is $6$. Resulting space and time complexities are $
633$ and $39$. 
\end{example}

\begin{example}
Consider $\mathcal{N}=\{M_1,M_2,M_3,M_4,M_5\}$ where FSMs $M_i$, $i\in [1;4]$
are as in Example \ref{example1tot} and $M_5$ is depicted in Fig. \ref{fig10}
with $C_5=\{20\}$, and apply Algorithm \ref{alg3}. (Line 1) It is easy to
see that for any $i\in [1;4]$, FSMs $M_5$ and $M_i$ are not bisimilar from
which, $\mathcal{N}^{\min}$ can be chosen as $\{M_1,M_2,M_5\}$. (Line 2) In
the first iteration of Algorithm \ref{alg}, starting from the initial state $
z=(\{1\},\{5\},\{19\})$ with label $b$, state $z^+=(\{2\},\{6\},\{20,21\})$
is reached. Since state $z^+$ does not satisfy the condition in line 12 of
Algorithm \ref{alg}, Algorithm \ref{alg} terminates from which, Algorithm 
\ref{alg3} terminates as well, giving as output that $\mathcal{N}$ is not
critically observable. Data stored in line 1 of Algorithm \ref{alg} are $72$
while those in lines 2--9 are $19$ from which, space complexity at line 12
is $91$. Since Algorithm \ref{alg} terminates at the first iteration and no
transitions are generated, time complexity is evaluated as $0$. A
traditional approach would first construct explicitly $\mathbf{M}(\mathcal{N}
)$ to then construct $\mathrm{Obs}(\mathbf{M}(\mathcal{N}))$. The number of
states of $\mathbf{M}(\mathcal{N})$ is $24$ and the one of $\mathrm{Obs}(
\mathbf{M}(\mathcal{N}))$ is $6$. Resulting space and time complexities are $
895$ and $39$. 
\end{example}

\begin{figure}[tbp]
\begin{center}
\includegraphics[scale=0.24]{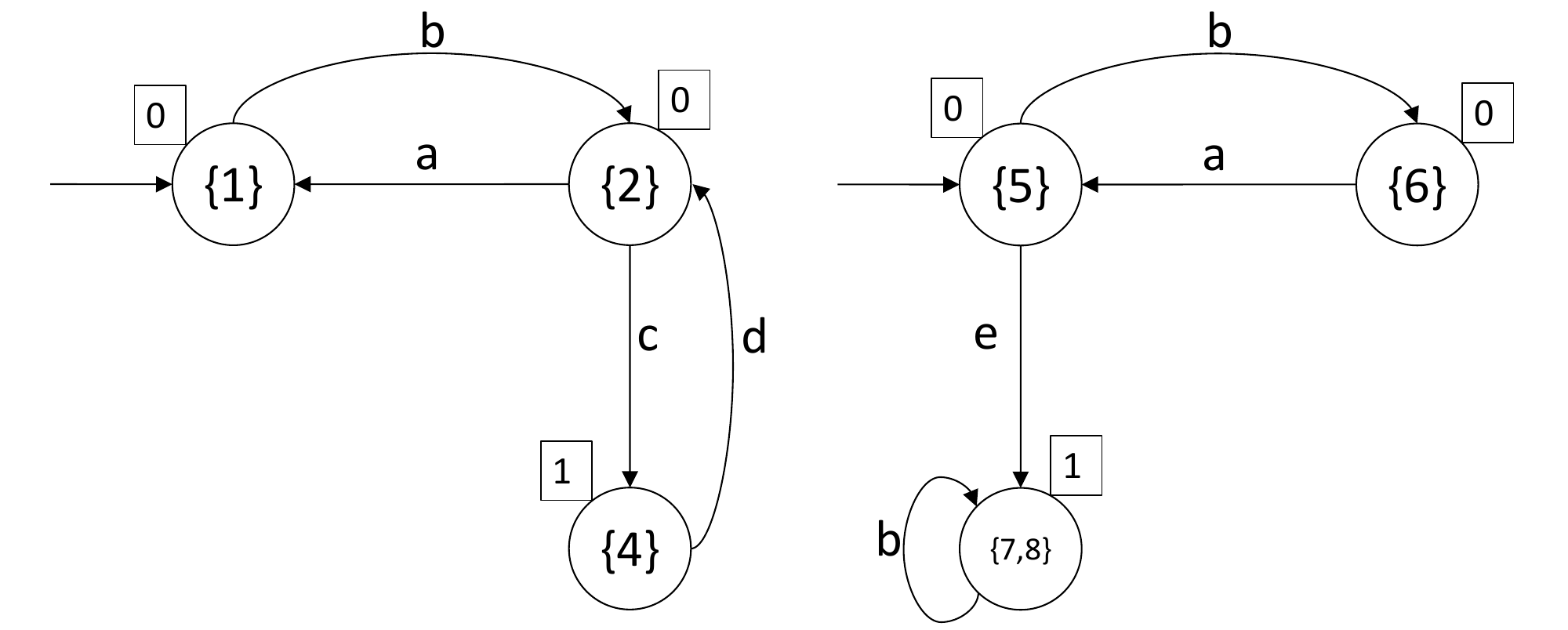}
\end{center}
\caption{Projected local observers $\protect\pi|_{\mathrm{Obs}(M_{1})}(
\mathrm{Obs}^d(\mathcal{N}^{\min}))$ in the left and $\protect\pi|_{\mathrm{
Obs}(M_{2})}(\mathrm{Obs}^d(\mathcal{N}^{\min}))$ in the right.}
\label{fig8}
\end{figure}

\begin{figure}[tbp]
\begin{center}
\includegraphics[scale=0.25]{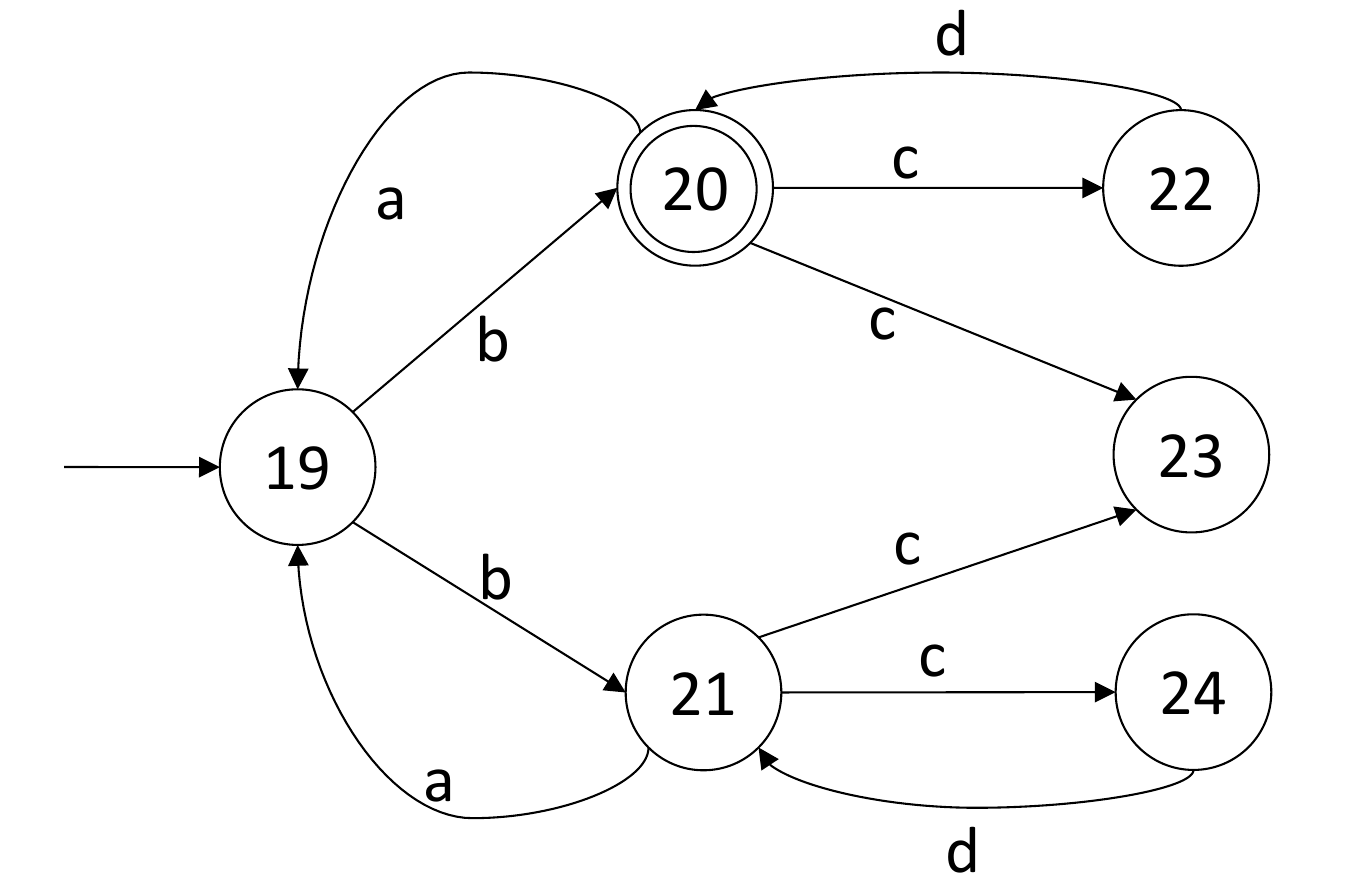}
\end{center}
\caption{FSM $M_5$.}
\label{fig10}
\end{figure}

\begin{table}[h]
\begin{center}
{\tiny 
\begin{tabular}{lll}
\hline
Complexity & $\mathrm{Obs}(\mathbf{M}(\mathcal{N}))$ & $\mathrm{Obs}^{d}(
\mathcal{N}^{\min})$ \\ \hline
Space & $O(2^{n_{\max}N})$ & $O(n_{\max}^2\,N+2^{n_{\max}N^{\min}})$ \\ 
Time & $O(2^{n_{\max}N})$ & $O(n_{\max}^2N^2\log(n_{\max})+2^{n_{\max}N^{
\min}})$ \\ \hline
\end{tabular}
}
\end{center}
\caption{Computational complexity analysis.}
\label{TableComplex}
\end{table}

\begin{figure}[tbp]
\begin{center}
\includegraphics[scale=0.24]{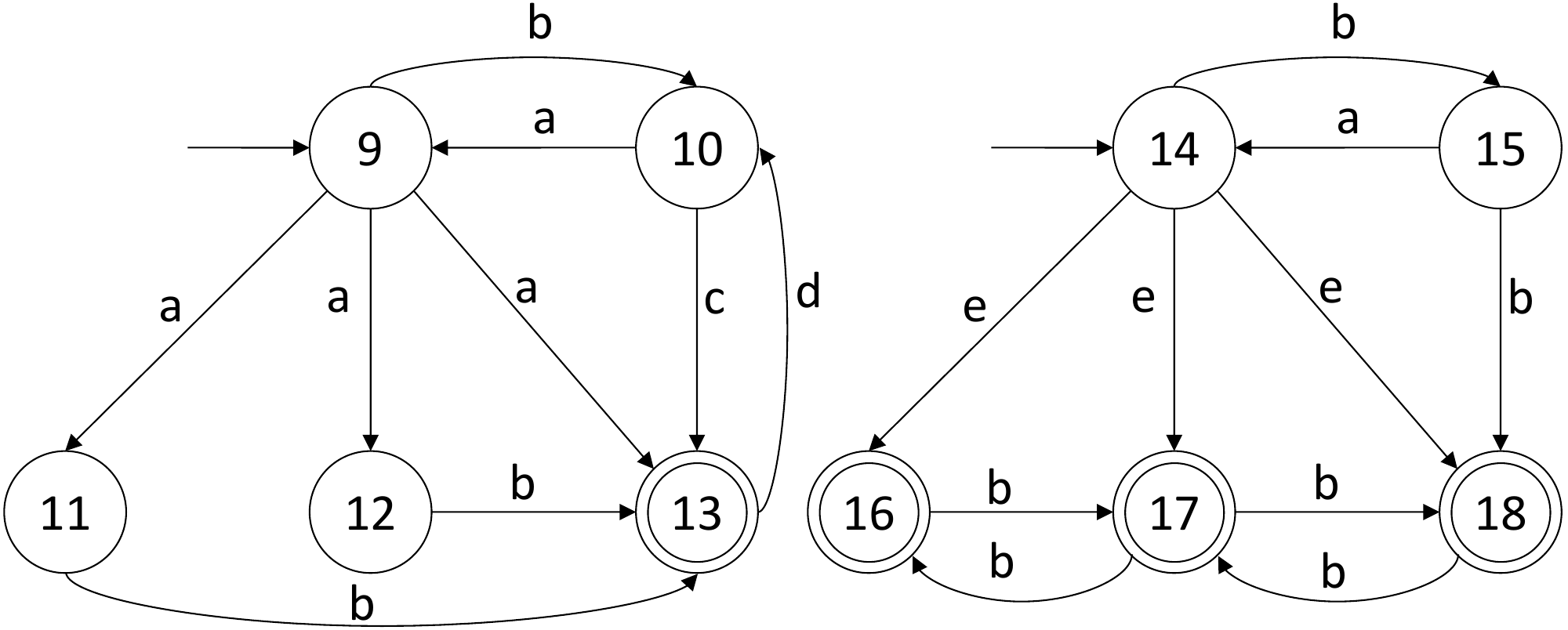}
\end{center}
\caption{FSM $M_3$ in the left and FSM $M_4$ in the right.}
\label{fig2}
\end{figure}

\section{An application to the analysis of a biological network}
\label{sec6} 
The advantages of our approach to model reduction and critical observability analysis have been illustrated in \cite{ThesisPezzuti} for Air Traffic Management systems. To show the applicability of our method in a different context we now consider a biological system. 
Systems biology is the mechanism by which macromolecules produce the functional properties of living cells through dynamic interactions \cite{alberghina}. The understanding of such complex biological systems requires the integration of experimental and computational research \cite{kitano}. \\
Here we propose a network of FSMs to build up a simplified model underlining the transcription network involved in bacterium \textit{Escherichia coli} sensing different nutrient conditions. In particular, we consider the system for the transport of secondary sugar galactose. 
We model the system by following \cite{Semsey} and the supplementary material therein. 
We start by defining an FSM for each gene product involved in the galactose network. According to a widespread simplifying assumption \cite{Alon}, we do not distinguish between transcripts (i.e. mRNA) and proteins: hence, we consider only the proteins and their interactions. The first FSM, depicted in Figure \ref{reg}(a), models the cAMP Receptor Protein (CRP). Starting from the initial state $0$ of inactivity, the presence of chemical signal $\cAMP$, representing the absence of glucose 6-phosphate (the preferred sugar for \textit{E. coli}), forces CRP protein to jump to a state of full activity, represented as state $1$ in the FSM . When in state $1$, the cAMP-CRP complex influences the transcription of other proteins; label $\mathrm{c}$ in the FSM serves to this purpose.
The presence of glucose 6-phosphate (represented as the logical negation $\overline{\cAMP}$ of label $\cAMP$) decreases the cAMP-CRP level bringing back the protein at state $0$ of inactivity. 
FSM associated with GalR is depicted in Figure \ref{reg}(b) and works in a similar way; the difference is that this protein represents the intracellular presence of D-galactose sugar. Thus it jumps to a state of full activity in the presence of label $\Dgal$.  
Figure \ref{transport} depicts FSMs modeling the two proteins GalP and MglB involved in the permease and transport of the galactose sugar. 
We start by describing the FSM modeling GalP. The full expression of GalP (state $3$) requires the full activity of both proteins in Figure \ref{reg}. Hence, from a modeling viewpoint, state $3$ is reached after both events $\mathrm{c}$ and $\mathrm{g}$, in any order, occur. (Events $\mathrm{\overline{c}}$ and $\mathrm{\overline{g}}$ are interpreted as the logical negations of events $\mathrm{c}$ and $\mathrm{g}$, respectively). The internal concentration of glucose 6-phosphate is assumed to be not known which implies that the set of initial states is given by $\{0,1\}$. Since we are interested in the full activity of the proteins involved in the transport of galactose sugar (state $3$ in Figure \ref{transport}(a)), we consider as critical, the states representing low or partial activity, i.e. states $0$, $1$ and $2$. FSM modeling the MglB system works similarly.
\begin{figure}[ht!]
\centering
\subfigure[FSM $\CRP$.]{
\begin{tikzpicture}[->,shorten >=0.1pt,node distance=2.4cm,auto,state/.style={state without output,thin,font=\normalsize},initial text=,bend angle=20, inner sep=2pt ]
\node[state, initial, initial where= left] (0) {0};
\node[state] (1) [right of=0] {1};
\path[->,thin, every node/.style={font=\small}] 
(0) edge [bend left] node {$\cAMP$} (1)
    edge [loop above] node {$\mathrm{\overline{c}}$} ()
(1) edge [bend left] node {$\overline{\cAMP}$} (0)
    edge [loop above] node {$\mathrm{c}$} ();
\end{tikzpicture}
}
\subfigure[FSM $\GalR$.]{
\begin{tikzpicture}[->,shorten >=0.1pt,node distance=2.4cm,auto,state/.style={state without output,thin,font=\normalsize},initial text=,bend angle=20, inner sep=2pt]
\node[state, initial] (0) {0};
\node[state] (1) [right of=0] {1};
\path[->,thin, every node/.style={font=\small}] 
(0) edge [bend left] node {$\Dgal$} (1)
    edge [loop above] node {$\mathrm{\overline{g}}$} ()
(1) edge [bend left] node {$\overline{\Dgal}$} (0)
    edge [loop above] node {$\mathrm{g}$} ();
\end{tikzpicture}
}
\caption{Regulation of galactose}
\label{reg}
\end{figure}
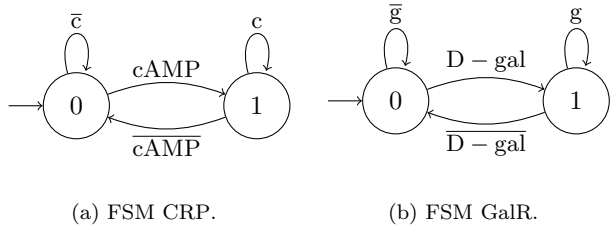 
\begin{figure}[ht!]
\centering
\subfigure[FSM $\GalP$.]{
\begin{tikzpicture}[->,shorten >=0.1pt,node distance=2.3cm,auto,state/.style={state without output,thin,font=\normalsize},initial text=,bend angle=30, inner sep=2pt]
\node[state, initial,initial where= below, accepting] (0) {0};
\node[state, initial,initial where= below, accepting] (1) [right of=0] {1};
\node[state, accepting] (2) [below of=0] {2};
\node[state] (3) [right of=2] {3};
\path[->,thin, every node/.style={font=\normalsize}] 
(0) edge [bend left] node {$\mathrm{c}$} (1)
    edge [loop above] node {$\mathrm{\overline{c}},\mathrm{\overline{g}}$} ()
(1) edge [bend left] node [] {$\mathrm{\overline{c}}$} (0)
    edge [loop above] node {$\mathrm{c},\mathrm{\overline{g}}$} ()
(0) edge [bend left] node [] {$\mathrm{g}$} (2)
(2) edge [bend left] node [] {$\mathrm{\overline{g}}$} (0)
    edge [loop below] node {$\mathrm{\overline{c}},\mathrm{g}$} ()
(1) edge [bend left] node [] {$\mathrm{g}$} (3)
(3) edge [bend left] node [] {$\mathrm{\overline{g}}$} (1)
    edge [loop below] node {$\mathrm{c},\mathrm{g}$} ()
(2) edge [bend left] node [] {$\mathrm{c}$} (3)
(3) edge [bend left] node {$\mathrm{\overline{c}}$} (2);
\end{tikzpicture}
}
\subfigure[FSM $\MglB$.]{
\begin{tikzpicture}[->,shorten >=0.1pt,node distance=2.3cm,auto,state/.style={state without output,thin,font=\normalsize},initial text=,bend angle=30, inner sep=2pt]
\node[state, initial, initial where= below, accepting] (0) {0};
\node[state, initial, initial where= below, accepting] (1) [right of=0] {1};
\node[state, accepting] (2) [below of=0] {2};
\node[state] (3) [right of=2] {3};
\path[->,thin, every node/.style={font=\normalsize}] 
(0) edge [bend left] node {$\mathrm{c}$} (1)
    edge [loop above] node {$\mathrm{\overline{c}},\mathrm{\overline{g}}$} ()
(1) edge [bend left] node [] {$\mathrm{\overline{c}}$} (0)
    edge [loop above] node {$\mathrm{c},\mathrm{\overline{g}}$} ()
(0) edge [bend left] node [] {$\mathrm{g}$} (2)
(2) edge [bend left] node [] {$\mathrm{\overline{g}}$} (0)
    edge [loop below] node {$\mathrm{\overline{c}},\mathrm{g}$} ()
(1) edge [bend left] node [] {$\mathrm{g}$} (3)
(3) edge [bend left] node [] {$\mathrm{\overline{g}}$} (1)
    edge [loop below] node {$\mathrm{c},\mathrm{g}$} ()
(2) edge [bend left] node [] {$\mathrm{c}$} (3)
(3) edge [bend left] node {$\mathrm{\overline{c}}$} (2);
\end{tikzpicture}
}
\caption{Permease and transport of galactose}
\label{transport}
\end{figure}
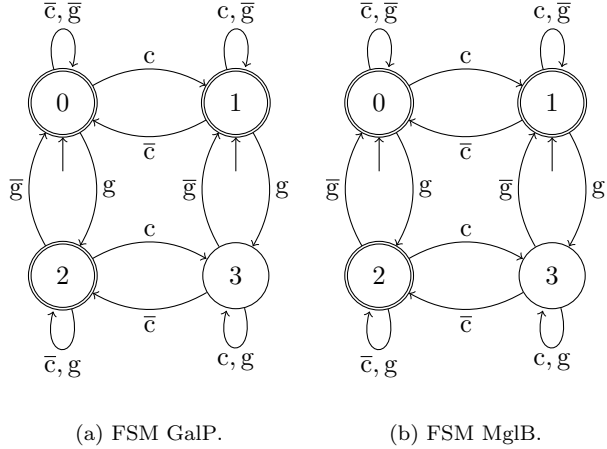
Detection of full activity of the proteins involved on the basis of available information can be rephrased as a critical observability problem. 
Consider the whole network of FSMs $\mathcal{N}:=\{\CRP,\GalR,\GalP,\MglB\}$. 
The first step of Algorithm \ref{alg3} consists in computing the quotient network $\mathcal{N}^{\min}$. Since GalP and MglB are isomorphic (hence, bisimilar), the equivalence classes induced by the bisimulation equivalence on $\mathcal{N}$ are $\mathcal{E}_1:=\{\CRP\},\mathcal{E}_2:=\{\GalR\},\mathcal{E}_3:=\{\GalP,\MglB\}$. The resulting network $\mathcal{N}^{\min}$ can be chosen as $\{\CRP,\GalR,\GalP\}$. By applying Algorithm \ref{alg} we get that $\mathcal{N}^{\min}$ is not critically observable. This is also evident from the fact that FSM $\GalP$ is not critically observable (with stimulus $\mathrm{g}$ there is a transition from the initial state $0$ to the critical state $2$ and a transition from the initial state $1$ to the non critical state $3$) and this situation lasts even after the composition of the whole network.
From the systems biology perspective, this means that we cannot know the activity level of proteins needed for the transport of galactose inside the bacterium without any further information, which could instead be obtained by additional measurements at the expense of higher costs of the experimental set-up. 
The model proposed in this section is simplified and hence, it does not capture the complexity of the whole network, which in fact is out of the scope of the present paper. In future work we plan to extend the proposed model along two directions: (i) The proposed network of FSMs provides a description of the proteins at steady state (in fact the work in \cite{Semsey}, inspiring our model, makes use of boolean networks). In future work we plan to extend the description to the transient regime which is of great interest in the systems biology community and which also is possible with the use of FSMs formalism (and not with boolean networks as used in \cite{Semsey}); (ii) A second research future direction consists in considering a larger and possibly more realistic transcription network. The outcome of the analysis would assist scientists in better understanding interaction mechanism in large-scale complex transcription networks.

\section{Discussion}

\label{sec7}
In this paper, we proposed decentralized critical observers for networks of
FSMs. On--line detection of critical states is performed by
local critical observers, each one associated with an FSM of the network.
For the design of local observers, efficient algorithms  were provided which are based on
on--the-fly techniques. Model reduction of networks of FSMs via 
a notion of bisimulation equivalence that takes into account criticalities 
was shown to facilitate the design of distributed observers for
the original network. \\
A discussion on connections with observability\footnote{Here, the term "observability" is used in a broader sense.} notions available in the current literature follows. 
These notions can be roughly categorized along the following directions: \textit{language} (L) vs. \textit{state space} (S) based notions; \textit{centralized} (C) vs. \textit{decentralized} (D) architectures; notions employed for purely \textit{analysis} purposes (A) vs. notions instrumental to address \textit{control design} (CD). An exhaustive review of existing literature in this regard is out of the scope of the present paper. 
We only recall here:  
within (L)--(C)--(A) the notions of language observability \cite{Wonham:88} and of diagnosability \cite{Sampath:95}; 
within (L)--(C)--(CD) optimal sensor activation in \cite{Wang:10,Wang:Aut10}; 
within (L)--(D)--(A) the notion of co--diagnosability in e.g. \cite{Zhou08,Lafortune2006,Debouk02,Schmidt13,Su05}, extending the one of diagnosability to a decentralized setting; 
within (L)--(D)--(CD) the notion of co--observability \cite{Rudie:89} extending the one of language observability to a decentralized setting, 
 and the work \cite{Wang:11} proposing an extension of the notion of co--observability to dynamic observations in controlled DESs and results for translating co--observability problems into co--diagnosability problems;  
within (S)--(C)--(A) the notions of diagnosability with a state space approach in \cite{Wonham:03}, current state observability \cite{BalluchiHSCC02}, opacity \cite{opacity}, detectability e.g. \cite{Detectability:07} and 
efficient algorithms for the computation of indistinguishable states in e.g. \cite{Wang:07}; 
within (S)--(D)--(A) the concept of states disambiguation in e.g. \cite{Rudie:03} and the notion of critical observability in \cite{IJRNC08} and in the present paper.\\
Many papers mentioned above consider deterministic FSMs with partially observable transitions while in our paper we consider nondeterministic FSMs with observable transitions; however, it is well known how to translate the first class of FSMs into the latter and vice versa. Moreover, the discussion in \cite{Wang:07} shows that the problems of guaranteeing observability in the sense of \cite{Wonham:88,Rudie:89}, can be transformed into problems of states disambiguation. \\
Connections with the concept of states disambiguation in e.g. \cite{Rudie:03} which is in (S)--(D)--(A) as our paper, follow. 
The work \cite{Rudie:03} deals with states disambiguation of DESs communicating each other: given a pair of DESs, the problem addressed consists in finding minimal information needed to be shared between two agents so that each agent is able to distinguish between the states of its DES. 
States disambiguation deals with studying conditions under which any pair of states of a DES can be distinguished on the basis of the traces associated to state runs of the DES and ending in the two states. 
It is readily seen that an FSM where \textit{all} states are disambiguated is also critically observable. The converse implication is not true in general: for example the FSM $M_1$ in Remark \ref{remark1} is critically observable but states $1$ and $2$ are not disambiguated. 
In fact, the notion of critical observability can be viewed as an extension of the concept of \textit{states disambiguation} to \textit{sets disambiguation}, where states in the set of critical states $C$ of an FSM $M=(X,X^0,\Sigma,\delta,C)$ need to be disambiguated from states in the set $X\backslash C$  obtained as the complement of $C$ on $X$. \\
Apart from technical differences between the concepts of states disambiguation of e.g. \cite{Rudie:03} and critical observability, the present paper approaches the efficient design of decentralized critical observers by extending techniques based on formal
methods. To the best of our knowledge, the proposed approach was not explored before in the literature on discrete event systems with the only exception of \cite{Wonham:03}. \\
A discussion on connections with \cite{Wonham:03} follows. While our work is within (S)--(D)--(A), the work \cite{Wonham:03} is within (S)--(C)--(A): it proposes a state space approach to the study of diagnosability of single FSMs and model reduction via bisimulation, as a tool to facilitate the check of the fault diagnosability property. Regarding the notions employed, while critical observability requires the immediate detection of a critical state, diagnosability notions as also the one considered in 
\cite{Wonham:03} allow for a finite delay before fault detection; moreover, while critical states are needed to be detected whenever they are reached, faults are detected only the first time they are reached. Regarding model reduction schemes employed, while \cite{Wonham:03} performs bisimulation--based reduction of \textit{single} FSMs, our approach proposes a bisimulation--based reduction of \textit{networks} of FSMs such that the FSMs composing the network are never composed; this approach allows model reduction at a higher level of abstraction, and therefore is more effective, as also substantiated by the computational complexity analysis performed and Example \ref{example1tot} presented. \\
In future work we plan to extend the formal methods techniques proposed in this paper, from decentralized critical observability to co--diagnosability. Useful insights in this regard are reported in \cite{Wonham:03}.

\textit{Acknowledgements: } We wish to thank Sina Lessanibahri for participating in fruitful discussions at the beginning of this project. 
We also thank Pasquale Palumbo for fruitful discussions on the systems biology example in Section \ref{sec6}.

\bibliographystyle{plain}
\bibliography{MAREAbib,biblio1,biblio_gal}

\end{document}